\newcommand{\B}{\cb}
\newcommand{\Ch}{\mathrm{Ch}}
\newcommand{\IGX}{I_GX}
\newcommand{\inCh}{\ovCh}
\newcommand{\IzeroX}[1]{I_G^{0,V}X}
\newcommand{\Lr}[1]{LR(#1)}
\newcommand{\Mbar}{\overline{\cM}}
\newcommand{\N}{\mathbf{N}}              
\newcommand{\odeg}{\deg_{\mathrm orb}}
\newcommand{\R}{\mathscr{R}}             
\newcommand{\Sm}{\mathscr{S}}             
\newcommand{\Smv}{^V\kern-.5em\mathscr{S}}
\newcommand{\T}{\mathbb{T}}    
\newcommand{\TV}{^V\kern-.4em\mathscr{T}}
\renewcommand{\ar}{R} 
\newcommand{\as}{S} 
\newcommand{\age}{\operatorname{age}}
\newcommand{\bra}[1]{ {\{ {#1} \}} }
\newcommand{\cM }{\mathscr{M}}
\newcommand{\cb}{{\mathscr B}}
\newcommand{\cf}{{\mathscr F}}
\newcommand{\co}{{\mathscr O}}
\newcommand{\crr}{{\mathscr R}}
\newcommand{\cs}{{\mathscr S}}
\newcommand{\euler}{\operatorname{eu}}
\newcommand{\g}{\mathfrak{g}}
\newcommand{\ix}{\mathscr{X}}
\newcommand{\m}{\mathfrak{m}}
\newcommand{\nc}{\mathbb{C}}
\newcommand{\nq}{\mathbb{Q}}
\newcommand{\nz}{\mathbb{Z}}
\newcommand{\oCh}{\mathscr{C}\kern-.25em{h}} 
\newcommand{\ovCh}{\widetilde{\oCh}}
\newcommand{\ok}{K_{orb}}
\newcommand{\one}{\mathbf{1}}
\newcommand{\res}[2]{\left.{#1}\right|_{#2}}
\newcommand{\rk}{\mathrm{rk}}
\newcommand{\spec}{\operatorname{Spec}}
\newcommand{\stack}[1]{\mathscr {#1}}
\newcommand{\ti}{\star}
\newcommand{\A}{\mathbb A}
\newcommand{\Pro}{\mathbb P}
\newcommand{\Q}{\mathbb Q}
\newcommand{\Z}{\mathbb Z}
\newcommand{\Iner}{\mathfrak{I}} 
\newcommand{\IIX}{{\Iner \kern-.75em \Iner}_{\ix}}
\newcommand{\II}{\Itwo}
\DeclareMathOperator{\Spec}{Spec}
\DeclareMathOperator{\ch}{ch}
\DeclareMathOperator{\Td}{Td}
\newcommand{\Imult}[1]{\mathbb{I}^{#1}}  
\newcommand{\Itwo}{\Imult{2}}
\newtheorem{thm}{Theorem}[subsection] 
\newtheorem{lm}[thm]{Lemma}
\newtheorem{prop}[thm]{Proposition} 
\newtheorem{crl}[thm]{Corollary}
\theoremstyle{definition}
\newtheorem{rem}[thm]{Remark} 
\newtheorem{df}[thm]{Definition} 
\newtheorem{df-pr}[thm]{Definition-Proposition}
\theoremstyle{remark} 
 \renewcommand{\thenota}{\kern-1ex}
\begin{document}
\title[Inertial Products] {A plethora of inertial products}

\subjclass[2010]{Primary: 55N32, 55N15.  Secondary:
14N35, 
53D45,
57R18,
19L10, 
19L47%
}

\author [D. Edidin]{Dan Edidin} \address {Department of
Mathematics, University of Missouri, Columbia, MO 65211, USA}
\email{edidind@missouri.edu} 

\author [T. J. Jarvis]{Tyler J. Jarvis} \address {Department of
Mathematics, Brigham Young University, Provo, UT 84602, USA}
\email{jarvis@math.byu.edu} 

\author [T. Kimura]{Takashi Kimura} \address {Department of
Mathematics and Statistics; 111 Cummington Street, Boston University;
Boston, MA 02215, USA } \email{kimura@math.bu.edu} 
\thanks{Research of the second author partially supported by NSA grant H98230-10-1-0181.
Research of the third author partially supported by NSA grant H98230-10-1-0179.}  

\date{\today}

\begin{abstract}  
For a smooth Deligne-Mumford stack $\ix$, we describe a large number of {\em inertial products} on $K(I\ix)$ and $A^*(I\ix)$ and \emph{inertial Chern characters}.  We do this  by developing a theory of {\em inertial pairs}.   Each inertial pair determines an inertial product on $K(I\ix)$ and an inertial product on $A^*(I\ix)$ and Chern character ring homomorphisms between them. 
  We show that there are many inertial pairs; indeed, 
  every vector bundle $V$ on $\ix$
  defines two new inertial pairs. We recover, as special
  cases, both the orbifold products of \cite{ChRu:04, AGV:02, FaGo:03, JKK:07, 
    EJK:10} and the virtual product of \cite{GLSUX:07}. 

  We also introduce an entirely new product we call the
  \emph{ localized orbifold product}, which is defined on
  $K(I\ix)\otimes\nc$.

The inertial products developed in this paper are used in a subsequent paper \cite{EJK:12b} to describe a theory of inertial Chern classes and power operations in inertial K-theory.  These constructions provide new manifestations of mirror
symmetry, in the spirit of the Hyper-K\"ahler Resolution Conjecture.
\end{abstract}

\maketitle 
\section{Introduction}
The purpose of this note is to describe a large number of \emph{inertial products} and Chern characters by developing a formalism of \emph{inertial
pairs}. An inertial pair for a Deligne-Mumford stack $\ix$ is a pair
$(\crr, \cs)$, where $\crr$ is a vector bundle on the
double inertia stack $\Itwo\ix$ and $\cs$ is a non-negative,
rational $K$-theory class on the inertia stack $I\ix$ satisfying
certain compatibility conditions. For stacks with finite
stabilizer, an inertial pair determines inertial products on
cohomology, Chow groups, and $K$-theory of $I\ix$. 
In Chow and cohomology, this product 
respects an orbifold grading equal to the ordinary grading corrected by 
the virtual rank of $\cs$ (or age).
An inertial pair also allows us to define an
inertial Chern character, which is a ring homomorphism for the new inertial products.

The motivating example of an inertial pair is the orbifold pair
$(\crr, \cs)$, where $\crr$ is the obstruction
bundle coming from orbifold Gromov-Witten theory, and $\cs$ is the
class defined in \cite{JKK:07}. The corresponding product is the
Chen-Ruan orbifold product, and the Chern character is the one defined
in \cite{JKK:07}. One of the results of this paper is that every
vector bundle $V$ on a Deligne-Mumford stack determines two inertial
pairs $(\crr^+V, \cs^+V)$ and $(\crr^{-}V,
\cs^{-}V)$. The $+$ product corresponds to the orbifold
product on the total space 
of the bundle $V$, but the
$-$ product is twisted by an
isomorphism
and does not directly
correspond to an orbifold product on a bundle. However, we prove (Theorem
\ref{thm.star-geo})
that there is an automorphism of the total Chow group
$A^*(I\ix) \otimes {\nc}$ 
 which induces a ring isomorphism between the 
 $-$ product for $V$ and 
the $+$ product for $V^*$.  A similar result also holds for cohomology.

When $V = {\mathbb T}$ is the tangent bundle
of ${\mathscr X}$, we show that the virtual product considered by
\cite{GLSUX:07} is the product associated to the inertial pair
$(\crr^{-}{\mathbb T}, \cs^{-}{\mathbb T})$. 
It follows, 
after tensoring with ${\mathbb C}$,
that the virtual 
orbifold
Chow ring is isomorphic (but not equal) to the 
Chen-Ruan orbifold Chow ring of the cotangent bundle ${\mathbb T}^*$.
Our result also implies that there is a corresponding Chern character ring homomorphism for the virtual product.

In the final section we show that in certain cases, even if $V$
is not a vector bundle but just an element of K-theory, 
we can still determine a product in localized $K$-theory. This allows us to define
a new product on 
$K(I\ix)\otimes {\nc}$, 
which we call the \emph{ localized product.}

In a subsequent paper \cite{EJK:12b} we will show that for {\em Gorenstein 
inertial pairs} (such as the one determining the virtual product) there is a theory of Chern classes and compatible power operations on inertial $K$-theory. This will be used to give further manifestations of mirror symmetry on hyper-K\"ahler Deligne-Mumford stacks.

\subsection*{Review of Previous Related Work}
Because there has been much work in this area by many authors from different areas of mathematics, we give a brief overview here of previous work to help put the current paper in context.

In 2000, inspired by physicists \cite{DHVW:85, DHVW:86}, Chen-Ruan  \cite{ChRu:02} developed a new product on the the cohomology of the inertia $I\ix$ of an almost complex orbifold $\ix$.  
In 2001, Fantechi-G\"ottsche \cite{FaGo:03} showed that  
when
the orbifold $\ix$ was a global quotient $[X/G]$ by a finite group, 
the Chen-Ruan orbifold cohomology ring $H_{CR}(\ix)$ was the $G$-invariant subring of $H_{FG}(X,G)$, the cohomology of the inertia manifold $IX$ endowed with a certain noncommutative product.
It followed that if $X$ is the symmetric product of a surface with trivial canonical class, then 
the orbifold cohomology of $\ix$ is 
isomorphic to the cohomology ring of the Hilbert scheme, 
as predicted by
the Hyper-K\"ahler Resolution Conjecture \cite{Rua:06}.

About the same time Kaufmann presented an axiomatic approach to orbifolding Frobenius algebras \cite{Kau:02,Kau:03}, and described how the Fantechi-G\"ottsche construction  fit into this framework \cite{Kau:pers}.

Adem-Ruan \cite{AdRu:03} then studied the K-theory 
$K(\ix)$ 
of a global quotient orbifold $\ix = [X/G]$, where $G$ is a 
 Lie
group and they also studied the twisted K-theory of $[X/G]$.  They did not construct a new ``orbifold'' product on 
$K(\ix)$, but they did show that there is a Chern character that gives a vector space isomorphism from  $K(\ix)$ 
to $H_{CR}(\ix)$.  This Chern character is \emph{not} a ring homomorphism.  Tu-Xu \cite{TuXu:06} later extended this result to more general twistings and orbifolds.  

Abramovich-Graber-Vistoli \cite{AGV:02} constructed an algebraic version $A_{AGV}(\ix)$ of the Chen-Ruan cohomology, producing the corresponding product on the Chow group $A_{AGV}(\ix) =A^*(I\ix)$ of the inertial stack $I\ix$ of a (smooth) Deligne-Mumford stack with projective coarse moduli space.   

In all of these constructions the basic idea is to use an analogue of the moduli space $\Mbar_{0,3}(\ix, 0)$ of genus-zero, three-pointed, orbifold (or $G$-equivariant) stable maps into $\ix$.  This space has three \emph{evaluation maps} $e_i:\Mbar_{0,3}(\ix, 0) \rTo I\ix$, and the structure constants $\langle \alpha_1, \alpha_2, \alpha_3 \rangle$ for the new product on $I\ix$ are given by computing $\int_{\Mbar_{0,3}(\ix,0)} \prod_{i=1}^3 e_i^*(\alpha_i) \cdot \euler(\R)$, where $\euler(\R)$ is the top Chern class of an obstruction bundle on $\Mbar_{0,3}(\ix, 0)$.  
The
main difficulty in computing the new product was computing the obstruction bundle $\R$ and its top Chern class.  

In 2004, Chen-Hu \cite{ChHu:06} produced a formula for the obstruction class in the  case of \emph{Abelian} orbifolds and used it to describe a deRham model for the Chen-Ruan product. In 2005, Jarvis-Kaufmann-Kimura \cite{JKK:07} proved a simple, intrinsic formula for the obstruction bundle $\R$ for general (not just Abelian) orbifolds, requiring no mention of stable curves or moduli spaces of maps.
In the Abelian case this formula reduces to Chen-Hu's result.  In \cite{JKK:07}, that formula is 
used
to do several things:
\begin{enumerate}
\item Create  Chow- (respectively, K-) theoretic analogues of the Fantechi-G\"oettsche ring $H_{FG}(X,G)$ whose rings of invariants is
the AGV ring $A_{AGV}(\ix)$ (respectively, a ring whose underlying vector space
is $K(\ix)$ of Adem-Ruan). 
Corresponding products twisted by discrete torsion were also introduced.
\item Define a new 
(orbifold) product on the  K-theory $\ok(\ix)$ of the inertia $I\ix$, for any smooth Deligne-Mumford stack $\ix$.
\item Define an orbifold Chern character ring homomorphism from the new orbifold K-theory rings to the corresponding Chow or cohomology rings
This new Chern character is
a deformation of the ordinary Chern character, as the latter fails to preserve the orbifold multiplications. 
\item Outline how the same formula and formalism 
may be used to give 
analogous results in other categories, 
e.g.
equivariant structures on almost complex manifolds with a Lie group action.
\end{enumerate}
About the same time, Adem-Ruan-Zhang \cite{ARZ:08} independently defined an orbifold product on twisted $\ok(\ix)$, 
and in the case of a global quotient by a finite group, Kaufmann-Pham \cite{KaPh:09} connected this to the twisted Drinfel'd double of the group ring.  

Becerra-Uribe in \cite{BeUr:07} extended these results 
to the equivariant setting for global quotients by infinite Abelian groups, and in \cite{EJK:10}, we extended these results to an equivariant setting for 
global quotients by general (non-Abelian) infinite groups
by introducing a variant of the formula for the obstruction bundle in \cite{JKK:07}.

A recent paper \cite{HuWa:11} repeats the description of  the orbifold product of \cite{JKK:07,ARZ:08}, the formula  of \cite{JKK:07,EJK:10} for the obstruction class, and the  Chern character ring homomorphism of \cite{JKK:07,EJK:10} in the almost-complex setting, as originally described in Section 10 of \cite{JKK:07}.

In \cite{BGNX:07, BGNX:11, GLSUX:07, LUX:08}, a different product in inertial Chow
and inertial cohomology
 theory, analogous to the Chas-Sullivan product \cite{ChSu:99} on loop spaces, was
 introduced. This so-called 
 \emph{virtual (orbifold) product} 
 is a special case of the constructions of this paper, 
 cf.~Section~\ref{sec:virprod}. Surprisingly, it is \emph{not} equal to the orbifold product for
 the cotangent bundle. 
 We note, however, that after tensoring with $\nc$, both the orbifold Chow and orbifold cohomology (but not orbifold K-theory) of the cotangent bundle are isomorphic to their virtual counterparts.

Kaufmann was the first to study the possibility of many stringy products on Frobenius algebras in settings involving functors other than just K-theory, cohomology, and Chow theory (see \cite{Kau:02,Kau:03,Kau:10a}).
He treated this primarily as an algebraic question and reformulated the problem of constructing a stringy product in terms of certain cocycles.  It is not \emph{a priori} clear that there should always exist a stringy product, but \cite{Kau:10a} shows how to extend the ideas of \cite{JKK:07} to prove existence of at least one stringy product for his more general setting.  In some cases he can also show uniqueness of the product \cite{Kau:04a}.

Finally, we note that Pflaum-Postuma-Tang-Tseng \cite{PPTT:07} have shown that the Hochschild cohomology of a certain algebra attached to a groupoid presentation of a
symplectic orbifold is isomorphic to the cohomology of the inertia orbifold as vector spaces.  The product in Hochschild cohomology induces a product on the cohomology of the inertia orbifold.  It would be interesting to understand the relation of that product to the the products described in this paper.

\subsection*{Acknowledgements} 
We wish to thank 
the \emph{Algebraic Stacks: Progress and Prospects} Workshop at BIRS for 
their support where part of this work was done.
The second author also wishes to thank Dale Husem\"oller for helpful
conversations and both the Max Planck Institut f\"ur Mathematik in
Bonn and the Institut Henri Poincar\'e for their generous support of
this research.  The third author wishes to thank Yunfeng Jiang and
Jonathan Wise for helpful conversations and the Institut Henri
Poincar\'e, where part of this work was done, for their generous
support.

\section{Background material from \cite{EJK:10}}
To make this paper self-contained, we recall some background material from the paper \cite{EJK:10}. 

\subsection{Background notation}

We work entirely in the complex algebraic category. We will work
exclusively 
with smooth Deligne-Mumford stacks $\ix$ which have \emph{finite
stabilizer}, by which we mean the inertia
map $I\ix \rTo \ix$ is finite. We will also assume that every stack
$\ix$ has the {\em resolution property}. This means that every
coherent sheaf is the quotient of a locally free sheaf. This
assumption has two consequences. The first is that the natural map
$K(\ix) \rTo G(\ix)$ is an isomorphism, where $K(\ix)$ is the
Grothendieck ring of vector bundles and $G(\ix)$ is the Grothendieck
group of coherent sheaves. The second consequence  is
that $\ix$ is a {\em quotient stack} \cite{EHKV:01}. This means that $\ix = [X/G]$,
where $G$ is a linear algebraic group acting on a scheme or algebraic
space $X$.

If $\ix$ is a smooth Deligne-Mumford stack, then we will implicitly
choose a presentation $\ix = [X/G]$. This allows us to identify the
Grothendieck ring $K(\ix)$ with the equivariant Grothendieck ring
$K_G(X)$, and the Chow ring $A^*(\ix)$ with the equivariant
Chow ring $A^*_G(X)$. We will use the notation $K(\ix)$ and 
$K_G(X)$ (resp. $A^*(\ix)$ and  $A^*_G(X))$  interchangeably.

\begin{df}
  Let $G$ be an algebraic group acting on a scheme or algebraic space
  $X$. We define the \emph{inertia space}  $$\IGX:= \{(g,x)\,|\, gx
  = x\} \subset G\times X.$$  There is an induced action of $G$ on
  $\IGX $ given by $g \cdot (m,x) = (gmg^{-1}, gx)$. The quotient
  stack $I\ix := [\IGX /G] $ is the 
  \emph{inertia stack}
of the quotient stack ${\stack X} := [X/G]$.

  More generally, we define the \emph{higher inertia spaces} to be the
  $k$-fold fiber products $$\Imult{k}_G X = \IGX  \times_X \ldots \times_X
  \IGX
  = \{(m_1,\ldots,m_k,x)\,|\,m_i x = x,\, \forall i=1,\ldots,k\} \subset G^k\times X. $$
The quotient stack $\Imult{k}\ix := \left[\Imult{k}_G X /G\right]$ 
is the corresponding \emph{higher inertia stack.}
\end{df}
The assumption that $\ix$ has finite stabilizer means that
the projection
$\IGX \rTo X$ is a finite morphism. The composition $\mu \colon G \times G \rTo G$ induces a composition
$\mu \colon \IGX \times_X \IGX \rTo \IGX$.
This composition makes $\IGX $ into an $X$-group
with identity section $X \rTo \IGX $ given by $x \mapsto (1,x)$. 

\begin{df}
Let $G^\ell$ be a $G$-space with the diagonal conjugation action. 
A \emph{diagonal conjugacy class} is a $G$-orbit $\Phi\subset G^\ell$.  
\end{df}

\begin{df}
For all $m$ in $G$, let $X^m = \{(m,x) \in \IGX \}$. For all $(m_1,\ldots,m_\ell)$ in $G^\ell$, let $X^{m_1,\ldots,m_\ell} = \{\,(m_1,\ldots,m_\ell,x)\in \Imult{\ell}_G X \}.$
For all conjugacy classes $\Psi \subset G$, let $I(\Psi) = \{(m,x)\in\IGX\,|\,m\in \Psi \}$. 
More generally, for all diagonal conjugacy classes $\Phi\subset G^\ell$, 
let $\Imult{\ell}(\Phi) = \{(m_1, \ldots m_\ell, x)\in \Imult{\ell}_G X\, |\, (m_1, \ldots , m_\ell) \in \Phi\}$. 
\end{df}

By definition, $I(\Psi)$ and $\Imult{\ell}(\Phi)$ are $G$-invariant subsets of $\IGX $
and $\Imult{\ell}_G X$, respectively. 
If $G$ acts 
with finite stabilizer  on $X$, then $I(\Psi)$ is empty unless $\Psi$
consists of elements of finite order. Likewise, $\Imult{\ell}(\Phi)$ is empty unless every $\ell$-tuple $(m_1, \ldots , m_\ell) \in \Phi$ generates a finite group. Since conjugacy classes of elements of finite order are closed, $I(\Psi)$ and $\Imult{\ell}(\Phi)$
are closed.
\begin{prop} \label{prop.inertiadecomp}
(\cite[Prop.~2.11, Prop.~2.17]{EJK:10}) If $G$
  acts 
properly on $X$, then $I(\Psi) = \emptyset$ for all but finitely many
  conjugacy classes
   $\Psi$ and the $I(\Psi)$ are unions of connected components
  of $\IGX $. Likewise, $\Imult{l}(\Phi)$ is empty for all but finitely
  many 
  diagonal conjugacy classes $\Phi\subset G^\ell$
  and each $\Imult{l}(\Phi)$ is a union of
  connected components of $\Imult{l}_G X$.
\end{prop}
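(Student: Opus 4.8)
The plan is to prove both assertions --- finiteness and the connected-component statement --- in each case (single inertia $I(\Psi)$ and higher inertia $\Imult{\ell}(\Phi)$) by a single mechanism: I will show that each such set is not only \emph{closed} (as already noted above) but also \emph{open} in $\IGX$ (resp.\ $\Imult{\ell}_G X$). Granting this, each is a union of connected components. Finiteness then comes for free: since $G$ is of finite type and $X$ is of finite type over $\nc$, the closed subspace $\IGX\subset G\times X$ (resp.\ $\Imult{\ell}_G X\subset G^\ell\times X$) is Noetherian, hence has only finitely many connected components; as the nonempty $I(\Psi)$ are pairwise disjoint and each is a nonempty union of connected components, only finitely many of them can be nonempty, and likewise for the $\Imult{\ell}(\Phi)$.

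To prove openness it suffices to work \'etale-locally on $X$, since an \'etale morphism is open and openness of a $G$-invariant subset may be checked after a $G$-equivariant \'etale cover. Because the action is proper with finite stabilizers, Luna's \'etale slice theorem (equivalently, the local structure theorem for the associated Deligne--Mumford quotient stack) provides, near any point $x_0$ with finite stabilizer $H_0:=\Stab(x_0)$, a $G$-equivariant \'etale neighborhood of the form $G\times^{H_0}S$, where $S$ is an $H_0$-invariant slice through $x_0$. A direct computation with this presentation (the stabilizer of $[g,s]$ is $g\,\Stab_{H_0}(s)\,g^{-1}$) shows that $\ell$-fold fibered inertia commutes with it, giving a $G$-equivariant identification
\[
\Imult{\ell}_G X \;\cong\; G\times^{H_0}\Imult{\ell}_{H_0}S,\qquad
\Imult{\ell}_{H_0}S=\{(h_1,\dots,h_\ell,s): h_i s = s\ \forall i\},
\]
over the neighborhood. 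Since $H_0$ is finite, this inertia splits as the finite disjoint union
\[
\Imult{\ell}_{H_0}S \;=\; \bigsqcup_{(h_1,\dots,h_\ell)\in H_0^\ell} \{(h_1,\dots,h_\ell)\}\times S^{h_1,\dots,h_\ell},
\]
where $S^{h_1,\dots,h_\ell}$ is the common fixed locus; being a finite disjoint union of closed sets, each stratum is simultaneously open and closed. Under the identification, a point of the stratum indexed by $(h_1,\dots,h_\ell)$ maps into $\Imult{\ell}_G X$ with group coordinates $(gh_1g^{-1},\dots,gh_\ell g^{-1})$ for some $g\in G$, so its diagonal conjugacy class equals that of $(h_1,\dots,h_\ell)$ and is locally constant as $(h_1,\dots,h_\ell)$ ranges over the discrete set $H_0^\ell$. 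Hence, locally, $\Imult{\ell}(\Phi)$ is the union of the finitely many open-and-closed strata indexed by $\Phi\cap H_0^\ell$, and so is open. The case $\ell=1$ gives the statement for $I(\Psi)$.

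The main obstacle is the local model: one must know that a proper action with finite stabilizers admits $G$-equivariant \'etale slices of the form $G\times^{H_0}S$. Over $\nc$ this is precisely Luna's \'etale slice theorem in the proper (Deligne--Mumford) setting; alternatively, since the conclusions are topological, one may check open-and-closedness in the analytic topology using Cartan's slice theorem for proper actions of complex Lie groups, and then observe that a $G$-invariant constructible subset that is analytically open and closed is Zariski open and closed as well. Once the slice is in hand, the decomposition of the finite-group inertia and the local constancy of the (diagonal) conjugacy class are immediate, and the global conclusion follows from the finite-type connectedness argument of the first paragraph.
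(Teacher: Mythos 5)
Your argument is correct in outline, but it takes a genuinely different route from the one the paper relies on (the proposition is quoted from \cite{EJK:10}, and the sentence immediately preceding it, establishing that the $I(\Psi)$ and $\Imult{\ell}(\Phi)$ are closed, is set up to feed that proof). There the order of the two claims is the reverse of yours: properness makes $\IGX \rTo X$ finite, so stabilizer orders are bounded on the Noetherian space $X$; a linear algebraic group in characteristic zero has only finitely many conjugacy classes of elements of any given bounded finite order, so only finitely many $I(\Psi)$ (resp.\ $\Imult{\ell}(\Phi)$) can be nonempty; and then $\IGX$ is a \emph{finite} disjoint union of the closed sets $I(\Psi)$, so each is automatically open, hence a union of connected components. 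That argument is elementary and uses no local structure theory. You instead prove openness first, via \'etale slices, and then harvest finiteness from Noetherianity of $\Imult{\ell}_G X$; this buys a very explicit local picture of the inertia (a finite disjoint union of fixed loci of a finite group) at the cost of invoking the \'etale local quotient structure of separated Deligne--Mumford stacks, a much heavier input, and it sidesteps the group-theoretic lemma on conjugacy classes of bounded-order elements.

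One step in your version needs to be made explicit. To check openness of $\Imult{\ell}(\Phi)$ on the covers $f\colon G\times^{H_0}S \rTo X$ you need the induced maps $\Imult{\ell}_G(G\times^{H_0}S) \rTo \Imult{\ell}_G X$ to be \'etale and \emph{jointly surjective}, and for that the slice must be stabilizer-preserving: $\Stab_G([g,s])$ must map onto $\Stab_G(f([g,s]))$, not merely into it. Your displayed computation identifies the stabilizer inside the model $G\times^{H_0}S$ only; for a general equivariant \'etale map the stabilizer upstairs can be strictly smaller than that of the image (e.g.\ a trivial action on $X$ with $G$ permuting the sheets of $X'=X\sqcup X$), in which case inertia does not pull back and the covers miss part of $\Imult{\ell}_G X$. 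The slice theorem does supply stabilizer-preserving neighborhoods after shrinking $S$, so this is a missing citation rather than a flaw, but as written the descent of openness is not fully justified.
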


We frequently work with a group $G$ acting on a space $X$ where
the quotient stack $[X/G]$ is not connected. As a consequence,
some care is required in the definition of the rank and Euler class of a vector
bundle.  Note that for any $X$, the group $A^0_G(X)$ satisfies $A^0_G(X) =\Z^{\ell}$, 
where $\ell$
is the number of connected components of the quotient stack $\ix = [X/G]$.
\begin{df}
If $E$ is an equivariant vector bundle on $X$, then we define the
{\em rank} of $E$ to be $\rk(E) := \Ch^0({ E}) \in \Z^{\ell} =
A^0_G(X)$. Note that the rank of $E$ lies in the semi-group ${\mathbb N}^{\ell}$, 
where ${\mathbb N} = \{0,1,2,\dots\}$.
If $E_1, \ldots, E_n$
are vector bundles, then the {\em virtual rank (or augmentation)}
of the element
$\sum_{i=1}^n n_i [E_i] \in K_G(X)$ 
is the weighted sum 
$\sum_i n_i \rk({\mathscr E}_i) \in \Z^{\ell}$.
\end{df}

If $E$ is a $G$ equivariant vector bundle on $X$,
then the  rank of $E$ on the connected components of $\ix = [X/G]$ 
is bounded (since we assume that $\ix$ has finite type).
\begin{df}\label{df:euler}
If $E$ is a $G$-equivariant vector bundle on $X$, we call 
the element
$\lambda_{-1}(E^*) = \sum_{i=0}^{\infty} (-1)^i  [\Lambda^i E^*] \in K_G(X)$ 
the \emph{K-theoretic Euler
    class} of $E$. 
(Note that this sum is finite.)

Likewise, we define the element $c_{\textbf{top}}( E) \in A^*_G(X)$,
corresponding to the sum of the top Chern classes of $E$ on
each connected component of $[X/G]$, to be the \emph{Chow-theoretic
  Euler class} of $E$.  These definitions can be extended to
any non-negative element by multiplicativity.  It will be convenient
to use the symbol $\euler(\cf)$ to denote both of these Euler classes for a
non-negative element $\cf \in K_G(X)$.
\end{df}
\subsection{The logarithmic restriction and twisted pullback}\label{sec:LR}
We recall a construction from \cite{EJK:10} that will be used several
times throughout the paper.  However, to improve clarity, we use slightly different notation than in \cite{EJK:10}. 

\begin{df} \label{def.logtrace}\cite{EJK:10}
Let $X$ be an
algebraic space with an action of an algebraic group $Z$.  Let $E$ be a rank-$n$ vector bundle on $X$ and let $g$ be  
a unitary automorphism of the fibers of $E \rTo X$. If we assume  that the action of $g$ 
commutes with the action of $Z$ on $E$, 
the eigenbundles for the action of $g$ are all $Z$-subbundles.
Let $\exp(2\pi \sqrt{-1} \alpha_1), \ldots ,\exp(2\pi \sqrt{-1}
\alpha_r)$ be the distinct eigenvalues of $g$ acting on $E$, 
with $0\le \alpha_k <1$ for all $k\in\{1\dots,r\}$,
and let $E_1, \ldots , E_r$ be the corresponding eigenbundles.  

We define the \emph{logarithmic trace  of $E$} by the formula 
\begin{equation} \label{eq.logtracebund}
L(g)(E) = \sum_{k =1}^{r} \alpha_k E_k \in
K_Z(X) \otimes {\mathbb R}
\end{equation}
on each connected component of $X$.
\end{df}

The following key fact about the logarithmic trace was proved in \cite{EJK:10}.
\begin{prop} \label{prop.logrestriction}\cite[Prop.  4.6]{EJK:10}
Let ${\mathbf g} = (g_1, \ldots , g_{\ell})$ be an $\ell$-tuple of elements of a compact subgroup of a reductive group $H$, satisfying $\prod_{i=1}^{\ell} g_i = 1$. 
Let $X$ be an
algebraic space with an action of an algebraic group $Z$, and let $V$ be a $(Z 
\times H)$-equivariant bundle on $X$, where $H$ is assumed to act trivially on $X$. The element 
\begin{equation*}
\sum_{i=1}^{\ell} L(g_i)(V) -V + V^{{\mathbf g}}
\end{equation*}
in $K_Z(X)$ is represented by a $Z$-equivariant vector bundle.
\end{prop}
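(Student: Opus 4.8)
The plan is to realize the virtual class $\sum_{i=1}^{\ell} L(g_i)(V) - V + V^{\mathbf g}$ as the first cohomology of an honest orbifold bundle on a genus-zero orbicurve, so that its effectiveness and integrality become automatic. First I would reduce to a fibrewise, representation-theoretic statement: since $H$ acts trivially on $X$ and commutes with the $Z$-action, the eigenbundle decomposition of each $g_i$, the joint fixed subbundle $V^{\mathbf g}$, and hence the whole expression, are all $Z$-equivariant and are built fibrewise from the $H$-representation structure of $V$. Thus it suffices to produce, $Z$-equivariantly and in families over $X$, a genuine vector space equal to the class in question; every construction below is natural in $V$ and compatible with the $Z$-action, so it globalizes to a $Z$-equivariant bundle on $X$.

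Next I would build the relevant curve. Let $\Gamma = \langle g_1, \dots, g_\ell\rangle$; in the situation of interest the $g_i$ have finite order and $\Gamma$ is finite. The relation $\prod_i g_i = 1$ is exactly the condition needed for the existence of a connected Galois $\Gamma$-cover $\tilde C \to \mathbb{P}^1$, branched over $\ell$ chosen points with local monodromy $g_i$ (this is the statement that $\ell$-tuples with trivial product correspond to homomorphisms from the fundamental group of the $\ell$-punctured sphere onto $\Gamma$). Let $\mathcal C = [\tilde C/\Gamma]$ be the resulting genus-zero orbicurve and let $\mathcal V$ be the orbifold bundle $\tilde C \times_\Gamma V$, i.e. $\mathcal O_{\tilde C} \otimes V$ with its diagonal $\Gamma$-action. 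Since $\tilde C$ is connected, $H^0(\tilde C, \mathcal O_{\tilde C}) = \mathbb C$ carries the trivial $\Gamma$-action, so $H^0(\mathcal C, \mathcal V) = (H^0(\tilde C, \mathcal O_{\tilde C}) \otimes V)^\Gamma = V^\Gamma = V^{\mathbf g}$, while $H^1(\mathcal C, \mathcal V) = (H^1(\tilde C, \mathcal O_{\tilde C}) \otimes V)^\Gamma$ is manifestly an honest (effective) representation.

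The heart of the argument is then a Riemann-Roch computation identifying $H^1(\mathcal C, \mathcal V)$ with our class. Using the Chevalley-Weil formula (equivalently, equivariant Riemann-Roch, or the holomorphic Lefschetz fixed-point formula on $\tilde C$), the $\Gamma$-representation $H^1(\tilde C, \mathcal O_{\tilde C})$ is expressed through the local rotation numbers of the $g_i$, which are precisely the eigenvalue angles $\alpha_k$ of Definition \ref{def.logtrace}. Because the coarse curve has genus zero, the rank term contributes exactly $V$ and the orbifold point at $p_i$ contributes exactly $-L(g_i)(V)$, giving $\chi(\mathcal C, \mathcal V) = H^0 - H^1 = V - \sum_{i=1}^{\ell} L(g_i)(V)$. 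Combined with $H^0(\mathcal C, \mathcal V) = V^{\mathbf g}$, this yields
\[
H^1(\mathcal C, \mathcal V) = H^0(\mathcal C, \mathcal V) - \chi(\mathcal C, \mathcal V) = \sum_{i=1}^{\ell} L(g_i)(V) - V + V^{\mathbf g},
\]
so the class is represented by the honest $Z$-equivariant bundle $H^1(\mathcal C, \mathcal V)$ on $X$; in particular it is both integral and effective, even though the individual terms $L(g_i)(V)$ are only rational.

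The step I expect to be the main obstacle is matching the Riemann-Roch bookkeeping exactly: pinning down the signs and the precise orbifold-point contributions so that the genus-zero computation returns $V - \sum_i L(g_i)(V)$ on the nose (with the branch convention $\alpha_k \in [0,1)$ feeding into the fractional contributions), and confirming that the fractional parts of the various $L(g_i)(V)$ cancel to leave an integral class. A secondary point to address is the reduction to finite $\Gamma$: the geometric construction needs an algebraic $\Gamma$-cover, so for elements of a compact subgroup of possibly infinite order I would either restrict to the finite-order case that actually arises in the applications here, or deduce the general statement by approximation from finite-order tuples, taking care near the branch locus where $L$ is discontinuous.
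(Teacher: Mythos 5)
The paper itself gives no proof of Proposition~\ref{prop.logrestriction}; it is quoted verbatim from \cite[Prop.~4.6]{EJK:10}, so the comparison is really with that source. Your construction---a connected Galois $\Gamma$-cover $\tilde C\to\mathbb{P}^1$ branched at $\ell$ points with monodromies $g_i$, followed by the identification of the class with $H^1(\tilde C,\mathcal O_{\tilde C}\otimes V)^\Gamma$ via Chevalley--Weil/orbifold Riemann--Roch---is correct when the $g_i$ generate a finite group, and it is in fact the original geometric source of this class: for $V=\T$ and finite stabilizers this $H^1$ is precisely the Gromov--Witten obstruction bundle of \cite{JKK:07}, and the content of \cite{EJK:10} is the curve-free reformulation in terms of the logarithmic trace. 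One presentational improvement you should make explicit: decompose $V=\bigoplus_W W\otimes\Hom_K(W,V)$ over irreducibles $W$ of $K=\overline{\langle g_1,\dots,g_\ell\rangle}$. Since $Z$ commutes with $H$, the multiplicity spaces $\Hom_K(W,V)$ are honest $Z$-equivariant bundles, and the whole statement reduces to showing that for each irreducible $W$ the rational number $\sum_i\lambda_i(W)-\dim W+\dim W^K$ is a non-negative integer (integrality is immediate from $\prod_i\det_W(g_i)=1$). This replaces your ``everything is natural in $V$, so it globalizes'' step by an actual argument and isolates exactly where the curve is used.

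The genuine gap is the case of elements of infinite order, which the proposition as stated (elements of a compact subgroup of a reductive $H$) allows and which \cite{EJK:10} covers. Neither of your proposed fixes closes it. Restricting to finite order proves a strictly weaker statement than the one asserted (though, to be fair, it does cover every use made of the proposition in this paper, since here the $g_i$ always lie in finite stabilizer groups). The approximation route has two real problems. First, it is not automatic that finite-order $\ell$-tuples with product \emph{exactly} $1$ are dense in the product-one locus: perturbing $g_1,\dots,g_{\ell-1}$ to finite-order elements forces $g_\ell=(g_1\cdots g_{\ell-1})^{-1}$, which generically has infinite order, so density itself needs an argument. Second, and more seriously, the two ingredients have opposite semicontinuity: with the branch convention $\alpha_k\in[0,1)$ the function $g\mapsto\rk L(g)(W)$ is only lower semicontinuous (it drops when an eigenvalue arrives at $1$), while $\mathbf g\mapsto\dim W^{\mathbf g}$ is upper semicontinuous (it jumps up in the limit), so non-negativity of $\sum_i\lambda_i(W)-\dim W+\dim W^{\mathbf g}$ on a dense set of tuples does not pass to the limit by any soft argument---and even if the rank inequality survived, you would still need to produce the representing object as a class in $K_Z(X)$, not merely control its rank. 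So as written the proposal proves the proposition only in the finite-order case; either state that restriction honestly or supply a genuinely different argument (as in \cite{EJK:10}) for the general compact case.
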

Using Proposition \ref{prop.logrestriction} we make the following definition.
\begin{df}Let $G$ be an algebraic group acting quasi-freely on an algebraic space,
and let $V$ be a $G$-equivariant vector bundle on $X$.
Given ${\mathbf g} = (g_1, \ldots g_{\ell}) \in G^{\ell}$,
if the $g_i$ all lie in a common compact subgroup
and satisfy $\prod_{i=1}^{\ell} g_i = 1$, then set
\begin{equation*}
V({\mathbf g}) = \sum_{i=1}^{\ell} L(g_i)(V|_{X^{\mathbf g}}) - 
V|_{X^{\mathbf g}} + V^{{\mathbf g}}|_{X^{{\mathbf g}}}.
\end{equation*}
\end{df}

We wish to extend this definition to give a map from $K_G(X)$  to $K_G(\IGX)$, but we must first understand the decomposition of $K_G(\IGX)$ and $A^*_G(\IGX)$ by conjugacy classes.

As a consequence of Proposition \ref{prop.inertiadecomp}, we see that $K_G(I_GX)$
(resp.~$A^*_G(\IGX )$) is a direct sum of the of $K_G(I(\Psi))$
(resp.~$A^*_G(I(\Psi))$) as $\Psi$ runs over conjugacy classes of
elements of finite order in $G$.  A similar statement holds for the
equivariant K-theory and Chow groups of the higher inertia 
spaces as well.

Using Morita equivalence we can give a more precise description of
$K_G(I(\Psi))$. If $m \in \Psi$ is any element and $Z =Z_G(m)$ is the
centralizer of $m$ in $G$, then $K_G(I(\Psi)) = K_Z(X^m)$ and
$A^*_G(I(\Psi)) = A^*_Z(X^m)$. Similarly,
 if 
 $\Phi\subset G^\ell$ 
 is a diagonal conjugacy class and 
$(m_1, \ldots , m_{\ell}) \in
\Phi$
and $Z = \bigcap_{i=1}^{\ell} Z_G(m_i)$, then $K_G(\Imult{l}(\Phi)) =
K_Z(X^{m_1, \ldots , m_{\ell}})$ and $A^*_G(\Imult{l}(\Phi))= A^*_Z(X^{m_1,\ldots
  , m_{\ell}})$. 

\begin{df}
  Define a map $L \colon K_G(X) \rTo K_G(\IGX )\otimes \Q$, 
  as follows. For each conjugacy class $\Psi \subset G$ and each $V\in  K_G(X)$
  let
  $L(\Psi)(V)$ be the class in $K_Z(I(\Psi))$ which is Morita equivalent to
  $L(g)(V|_{X^g}) \in K_Z(X^g)_\Q$. Here $g$ is any element of $\Psi$,
  and $Z = Z_G(g)$ is the centralizer of $g \in G$. The class $L(V)$
  is the class whose restriction to $I(\Psi)$ is $L(\Psi)(V)$.
\end{df}
The proof of \cite[Lm.~5.4]{EJK:10} shows that 
$L(\Psi)(V)$ 
(and thus $L(V)$)
is independent of the choice of $g \in \Psi$. 

\begin{df} \label{def.twistedpullback}
If the diagonal conjugacy class $\Phi\subset G^\ell$ is represented by $(g_1,\ldots,g_\ell)$ such that $\prod_{i=1}^\ell g_i = 1$, then we define $V(\Phi)$ to be the class in $K_G(\Imult{l}_G X)$  which is Morita equivalent to 
$V({\mathbf g})$, where ${\mathbf g} = (g_1, \ldots , g_{\ell})$ is any 
element of $\Phi$. Again $V(\Phi)$ is independent of the choice
of representative ${\mathbf g} \in \Phi$.
\end{df}

\begin{df}
  Identify $\Imult{\ell}_G X$ with the closed and open subset of
  $\Imult{\ell+1}_G X$ 
  consisting of tuples 
  $\{(g_1, \ldots, g_{\ell+1},x) | g_1
  g_2 \ldots g_{\ell+1} =1\}$.  If $V \in K_G(X)$, let  $LR(V)
  \in 
  K_G(\Imult{\ell}_G X)$ 
  be the class whose restriction to
  $\Imult{l+1}(\Phi)$ is $V(\Phi)$, where 
the diagonal conjugacy class $\Phi   \in  G^{\ell+1}$ is represented by a tuple
    $(g_1, \ldots , g_{\ell+1})$ satisfying
$g_1 \cdots g_{\ell+1} = 1$.
\end{df}

\subsection{Orbifold products and the orbifold Chern character}

In this section we briefly review the construction and properties of orbifold products and 
orbifold
Chern characters because they serve as a model for what we will do later.

\begin{df}
For $i\in\{1,2,3\}$, let $e_i:\II_G X \rTo I_G X$ be the
evaluation morphism taking $(m_1,m_2,m_3, x)\mapsto (m_i,x)$ and let $\mu:\II_G X \rTo I_G X$ be the morphism taking $(m_1,m_2,m_3, x)\mapsto (m_1m_2,x)=(m_3^{-1},x)$. 
\end{df}

\begin{df}
Let $\T$ be the equivariant bundle on $X$ corresponding to the tangent bundle
of $\ix$,
which satisfies $\T = TX - \g$ in $K_G(X)$, where $\g$ is the Lie algebra of $G$.
\end{df}
\begin{df}[\protect{\cite{EJK:10,JKK:07,Kau:10a}}] \label{df:OrbifProd}
The \emph{orbifold product} on $K_G(\IGX)$ and $A_G^*(\IGX)$ is defined as
\begin{equation}\label{eq:OrbifoldProduct}
x \ti y := \mu_*(e_1^*x \cdot e_2^*y \cdot \euler(\Lr{\T})),
\end{equation}
both for $x,y \in K_G(I_G X)$ and for $x,y \in A^*_G(\IGX)$.
\end{df}

\begin{df}\label{df:S}
We define the element $\Sm := L(\T)$ in $K_G(\IGX)_\nq$ to be the logarithmic trace of $\T$,
 that is, for each $m$ in $G$, 
we define $\Sm_m$ in  $K_{Z_G(m)}(X^m)_\nq$ by 
\begin{equation*}
\Sm_m := L(m)(\T).
\end{equation*}
The rank of $\Sm$ is a $\nq$-valued, locally constant function on
$I\ix=[\IGX/G]$ called the \emph{age}.
\end{df}
\begin{rem}
If the age of a connected component 
$[U/G]$
of 
$I\ix$
is zero, then $[U/G]$ must be a
connected component of 
$\ix \subset I\ix$\!.
\end{rem}
\begin{rem}\label{rem:PanderToRalph}
In \cite{Kau:10a}  the classes $\Sm$, manifestations of what physicists call \emph{twist fields}, were interpreted in terms of cocycles which were then used to define stringy products. Our construction may be regarded as a realization of this procedure.
\end{rem}
\begin{df}
Given an element $x$ in 
$A^*_G(I_G X)$ with ordinary Chow grading 
$\deg x$, the \emph{orbifold 
degree (or grading) of $x$} is, like the ordinary Chow grading,
constant on each component 
 $U$ of $\IGX$ 
corresponding to a connected component of $[U/G]$ of $[\IGX/G]$.  On such 
a
component $U$ we define
it to be the non-negative rational number
\begin{equation}\label{eq:OrbGrading}
\res{\odeg x}{U} = \res{\deg x}{U} + 
\age([U/G]).
\end{equation}
The induced grading on the group  $A_G^*(\IGX)$ consists of summands $A_G^{\bra{q}}(\IGX)$ of all elements with orbifold degree $q$.
\end{df}

\begin{thm}[\cite{JKK:07,EJK:10}]
The equivariant Chow group $(A_G^*(\IGX),\ti, \odeg)$ 
is a 
$\nq^C$-graded, 
commutative ring with unity $\one$, where $\one$ is the identity element in $A^*_G(X) = A^*_G(X^1)\subseteq A^*_G(\IGX)$ and $C$ is the number of connected components of 
$[\IGX/G]$.

Equivariant K-theory $(K_G(\IGX),\ti)$ is a commutative ring with unity $\one$, where $\one:=\co_X$ is the structure sheaf of $X=X^1\subset \IGX$.
\end{thm}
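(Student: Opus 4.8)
The plan is to verify the ring axioms sector by sector, using the decomposition of $K_G(\IGX)$ and $A^*_G(\IGX)$ into summands indexed by conjugacy classes (Proposition~\ref{prop.inertiadecomp}) together with the Morita identifications $K_G(I(\Psi)) = K_{Z}(X^m)$ and $A^*_G(I(\Psi)) = A^*_Z(X^m)$. On the sector of $\II_G X$ indexed by $\bg = (g_1,g_2,g_3)$ with $g_1 g_2 g_3 = 1$, I would compute $\euler(\Lr{\T})$ from its defining formula
\[
\Lr{\T} \cong \sum_{i=1}^{3} L(g_i)(\T|_{X^{\bg}}) - \T|_{X^{\bg}} + \T^{\bg}|_{X^{\bg}},
\]
which is a genuine bundle by Proposition~\ref{prop.logrestriction}. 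For the unit, when $g_1 = 1$ one has $g_3 = g_2^{-1}$, and pairing each eigenvalue $e^{2\pi\sqrt{-1}\alpha}$ of $g_2$ (with $0\le\alpha<1$) against the eigenvalue $e^{2\pi\sqrt{-1}(1-\alpha)}$ of $g_2^{-1}$ shows $L(g_2)(\T)+L(g_3)(\T) = \T - \T^{g_2}$; since $L(1)(\T)=0$ and $\T^{\bg}=\T^{g_2}$, the formula gives $\Lr{\T}=0$. Thus $\euler(\Lr{\T})=1$ there, $\mu$ restricts to the identity $X^{g_2}\rTo X^{g_2}$, and the projection formula yields $\one \ti y = y$.

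Commutativity I would read off from the symmetry of the construction. The swap $\sigma\colon(m_1,m_2,x)\mapsto(m_2,m_1,x)$ is a $G$-equivariant involution of $\II_G X$ with $e_1\circ\sigma=e_2$ and $e_2\circ\sigma=e_1$, while $\mu\circ\sigma$ agrees with $\mu$ after the conjugation by $m_2$ identifying the sectors $m_1 m_2$ and $m_2 m_1$ in $\IGX$. Since the displayed formula for $\Lr{\T}$ is manifestly symmetric in $(g_1,g_2,g_3)$, the class $\euler(\Lr{\T})$ is $\sigma$-invariant, and these facts together give $x\ti y = y\ti x$.

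The main obstacle is associativity. I would introduce the quadruple inertia space $\Imult{3}_G X \cong \{(g_1,g_2,g_3,g_4,x): g_1g_2g_3g_4=1\}$, with evaluations $f_1,f_2,f_3$ and multiplication $\mu_4$ onto the sector $g_1g_2g_3$, and the three-fold product $\langle x,y,z\rangle := \mu_{4,*}\bigl(f_1^*x\cdot f_2^*y\cdot f_3^*z\cdot\euler(\T(\bg))\bigr)$, where $\T(\bg)=\sum_{i=1}^4 L(g_i)(\T)-\T+\T^{\bg}$ on $X^{\bg}$. The aim is to prove $(x\ti y)\ti z = \langle x,y,z\rangle = x\ti(y\ti z)$. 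Expanding $(x\ti y)\ti z$ by base change along the fiber square relating $\II_G X$ to $\Imult{3}_G X$ and applying the projection formula reduces the claim to the bundle identity
\[
\T\bigl(g_1,g_2,(g_1g_2)^{-1}\bigr) + \T\bigl(g_1g_2,g_3,(g_1g_2g_3)^{-1}\bigr) + N = \T(g_1,g_2,g_3,g_4)
\]
on $X^{\bg}$, with $N$ the excess normal bundle of the square, together with the mirror identity for the other bracketing. I expect this excess-bundle (cocycle) computation, carried out through the additivity of the logarithmic trace and the compatibility of $LR$ with the maps between inertia spaces of successive orders from \cite{EJK:10}, to contain essentially all the content of the theorem.

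It remains to check the grading in the Chow case and to transcribe everything to $K$-theory. Tracking codimension through $e_i^*$ (degree-preserving), multiplication by $\euler(\Lr{\T})$ (raising degree by $\rk\Lr{\T}$), and $\mu_*$ for the embedding $X^{\bg}\hookrightarrow X^{g_1g_2}$ (raising degree by the codimension), and then adding $\age(g_1g_2)$, reduces $\odeg(x\ti y)=\odeg x+\odeg y$ to
\[
\rk\Lr{\T}+\codim(X^{\bg},X^{g_1g_2})+\age(g_1g_2)=\age(g_1)+\age(g_2).
\]
With $\rk\Lr{\T}=\age(g_1)+\age(g_2)+\age(g_3)-\rk\T+\rk\T^{\bg}$ and the pairing identity $\age(h)+\age(h^{-1})=\rk\T-\rk\T^{h}$ for $h=g_1g_2$ (recall $g_3=(g_1g_2)^{-1}$), this is immediate, so $A^*_G(\IGX)$ is $\nq^C$-graded. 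The $K$-theory statement carries no grading and follows from the identical unit, commutativity, and associativity arguments with the $K$-theoretic Euler class $\lambda_{-1}$ in place of $\ctop$ and $\co_X$ as the unit.
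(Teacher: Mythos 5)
This theorem is quoted from \cite{JKK:07,EJK:10} rather than proved in this paper, and your reconstruction follows the same route as that source and as the general machinery of Section~3 here: the unit via the vanishing of $\Lr{\T}$ on sectors with $g_1=1$ (your pairing of the eigenvalues of $g_2$ and $g_2^{-1}$ is exactly the identity $L(g)(\T)+L(g^{-1})(\T)=\T-\T^{g}$), associativity via the higher inertia space and the excess-bundle cocycle identity --- your displayed bundle identity plus its mirror is precisely \eqref{eq.neccasso} for $\R=\Lr{\T}$ --- and the grading via $\rk L(h)(\T)+\rk L(h^{-1})(\T)=\rk\T-\rk\T^{h}$ applied to $h=g_1g_2$. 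I checked the grading arithmetic and it is correct, and the $K$-theoretic transcription is routine.

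The one step that is not yet a proof is commutativity. You route the argument through the naive swap $\sigma(m_1,m_2,x)=(m_2,m_1,x)$, but $\mu\circ\sigma$ sends $(m_1,m_2,x)$ to $(m_2m_1,x)$, which is \emph{not} the point $\mu(m_1,m_2,x)=(m_1m_2,x)$ of $\IGX$; the two differ by the action of the element $m_2$, which depends on the point. So you cannot simply write $(\mu\circ\sigma)_*=\mu_*$ and conclude by the projection formula. This is exactly the slip this paper flags in Proposition~3.9 of \cite{EJK:10}: the involution that works is the braiding map $i(m_1,m_2,x)=(m_1m_2m_1^{-1},m_1,x)$, which satisfies $\mu\circ i=\mu$ on the nose; one then verifies $i^*\Lr{\T}=\Lr{\T}$ (condition \eqref{eq.commutative}) and that $e_1\circ i$ and $e_2$ induce the same pullback because they differ by the action of a stabilizing element. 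Your parenthetical ``after the conjugation by $m_2$'' is pointing at the right fix --- composing $\sigma$ with that point-dependent conjugation produces the braiding map, and such inner conjugations act trivially on equivariant Chow and $K$-theory --- but as written the identification is asserted rather than established, and the ``manifest symmetry'' of $\Lr{\T}$ in $(g_1,g_2,g_3)$ also only gives $\sigma$-invariance up to the same conjugation. Replace the naive swap by the braiding map and this step, and hence the whole argument, closes up.
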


\begin{df}
The \emph{orbifold Chern character} $\oCh:K_G(\IGX)\rTo A_G^*(\IGX)_\nq$
is defined by the equation 
\begin{equation*}
\oCh(\cf) := \Ch(\cf)\cdot\Td(-\Sm)
\end{equation*}
for all $\cf \in K_G(\IGX)$, where $\Td$ is the usual Todd class. Moreover, for all $\alpha\in \nq$ we
define $\oCh^\alpha(\cf)$ by the equation
\begin{equation*}
\oCh(\cf) = \sum_{\alpha\in\nq} \oCh^\alpha(\cf),
\end{equation*}
where each $\oCh^\alpha(\cf)$ belongs to $A^{\bra{\alpha}}_G(\IGX)$.

The \emph{orbifold virtual rank (or orbifold augmentation)} is 
$\oCh^0:K_G(\IGX)\rTo 
A^{\bra{0}}_G(\IGX)_\nq
$.
\end{df}

\begin{thm}[\cite{EJK:10,JKK:07}]
The orbifold Chern character $$\oCh:(K_G(\IGX),\ti) \rTo (A_G^*(\IGX)_\nq,\ti)$$ is a ring homomorphism.  

In particular, if $[U/G]$ is a connected component of $[\IGX/G]$,  
then the virtual rank homomorphism restricted to the component $[U/G]$ gives a homomorphism $\oCh^0:K_G(U)\rTo 
A_G^0(U) _\nq
= \nq$, satisfying
\begin{equation*}
\oCh^0(\cf) = 
\begin{cases} 
0 & \mathrm{if} 
\age([U/G])
> 0 \\
\Ch^0(\cf)  & \mathrm{if} 
\age([U/G]) 
= 0. \\
\end{cases}
\end{equation*}
for any $\cf \in K_G(U)$.
\end{thm}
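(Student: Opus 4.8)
The plan is to derive the homomorphism property $\oCh(x\ti y)=\oCh(x)\ti\oCh(y)$ from Grothendieck--Riemann--Roch (GRR) for the multiplication map $\mu$, the multiplicativity of the ordinary Chern character, and the standard comparison between the $K$-theoretic and the Chow-theoretic Euler class, ultimately reducing the whole statement to a single additive identity in $K_G(\II_GX)_\nq$. Since $K_G(\IGX)$, $A^*_G(\IGX)$, both products, and $\oCh$ all respect the decomposition by conjugacy classes from Proposition~\ref{prop.inertiadecomp}, and since the product of classes supported on sectors $\Psi_1,\Psi_2$ is supported on the sector cut out by $\mu$, I would first fix a triple sector, i.e.\ a diagonal conjugacy class $\Phi\subset G^3$ represented by $(g_1,g_2,g_3)$ with $g_1g_2g_3=1$, and work Morita-equivalently on the fixed loci $X^{\mathbf{g}}=X^{g_1}\cap X^{g_2}\subseteq X^{g_1g_2}$ with their residual group actions. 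On such a sector $\Lr\T$ is represented by an honest vector bundle by Proposition~\ref{prop.logrestriction}, so its Euler classes and Todd class are defined.

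Next I would run the computation. Because $\ix$ has finite stabilizer the map $\mu\colon\II_GX\rTo\IGX$ is finite, so $\mu_*$ is defined and GRR applies in the form $\Ch(\mu_*\xi)=\mu_*\!\big(\Ch(\xi)\,\Td(T_\mu)\big)$, where $T_\mu:=T\II_GX-\mu^*T\IGX$ is the virtual relative tangent bundle. Expanding the left-hand side,
\begin{equation*}
\oCh(x\ti y)=\Ch\big(\mu_*(e_1^*x\cdot e_2^*y\cdot\lambda_{-1}((\Lr\T)^*))\big)\cdot\Td(-\Sm),
\end{equation*}
I would apply GRR, the multiplicativity $\Ch(e_i^*x)=e_i^*\Ch(x)$, the identity $\Ch(\lambda_{-1}((\Lr\T)^*))=\ctop(\Lr\T)\,\Td(\Lr\T)^{-1}$, and the projection formula to absorb $\Td(-\Sm)$ as $\mu^*\Td(-\Sm)$. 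Expanding the right-hand side,
\begin{equation*}
\oCh(x)\ti\oCh(y)=\mu_*\big(e_1^*(\Ch(x)\Td(-\Sm))\cdot e_2^*(\Ch(y)\Td(-\Sm))\cdot\ctop(\Lr\T)\big),
\end{equation*}
and comparing the two displays, the arbitrariness of $x,y$ together with the fact that $\Td$ converts sums to products shows that the homomorphism property holds once we establish the additive identity
\begin{equation*}
\Lr\T \;=\; T_\mu+e_1^*\Sm+e_2^*\Sm-\mu^*\Sm \tag{$\star$}
\end{equation*}
in $K_G(\II_GX)_\nq$. That $\oCh$ preserves the unit is immediate, since $\Sm$ vanishes on the untwisted sector.

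The identity $(\star)$ is the crux and the main obstacle. I would prove it sector-by-sector from the definition $\Lr\T=\sum_{i=1}^3 L(g_i)(\T)-\T+\T^{\mathbf{g}}$ (restricted to $X^{\mathbf{g}}$), using three ingredients: first, $L(g_i)(\T)=e_i^*\Sm$ directly from the definition of $\Sm$; second, the elementary eigenvalue bookkeeping $\Sm_{g^{-1}}=\T-\T^{g}-\Sm_{g}$, which applied to $g=g_1g_2$ (so $g^{-1}=g_3$) rewrites $L(g_3)(\T)$ as $\T-\T^{g_1g_2}-\mu^*\Sm$; and third, the Morita identifications $\T^{\mathbf{g}}=T\II_GX$ and $\T^{g_1g_2}=\mu^*T\IGX$, which come from $\T=TX-\g$ together with $\g^{\mathbf{g}}=\Lie Z$ for $Z=Z_G(g_1)\cap Z_G(g_2)$. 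Substituting, the $\pm\T$ terms cancel and $\T^{\mathbf{g}}-\T^{g_1g_2}$ collapses to $T_\mu$, yielding $(\star)$. The delicate point is keeping the fractional ages consistent across the restriction $X^{\mathbf{g}}\hookrightarrow X^{g_1g_2}$; here the integrality guaranteed by Proposition~\ref{prop.logrestriction} is exactly what makes the a~priori rational combination $e_1^*\Sm+e_2^*\Sm-\mu^*\Sm=\Lr\T-T_\mu$ an honest integral class, so no contradiction arises.

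Finally, for the virtual rank statement I would argue purely from the grading. On a component $[U/G]$ with $a:=\age([U/G])$, the orbifold grading shifts the ordinary Chow grading by $a$, so $A^{\bra{0}}_G(U)$ is the ordinary-degree-$(-a)$ part of $\oCh(\cf)|_U=\Ch(\cf)|_U\cdot\Td(-\Sm)|_U$. If $a>0$ this part is zero, because ordinary Chow degrees are non-negative, giving $\oCh^0=0$. If $a=0$, then by the Remark following Definition~\ref{df:S} the component lies in $\ix\subset I\ix$, so $m=1$ there and $\Sm=L(1)(\T)=0$; hence $\Td(-\Sm)=1$, $\oCh(\cf)|_U=\Ch(\cf)|_U$, and its degree-$0$ part is $\Ch^0(\cf)$, exactly as claimed.
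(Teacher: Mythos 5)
Your proposal is correct and follows essentially the same route as the paper's framework: the paper recalls this theorem from \cite{EJK:10,JKK:07} without reproving it, but its own general mechanism (that the identity \eqref{eq.chernidentity}, ${\R}=e_1^*\cs+e_2^*\cs-\mu^*\cs+T_\mu$, forces $\inCh$ to be a ring homomorphism via the formal GRR argument of \cite[Thm.~7.3]{EJK:10}) is exactly your reduction to $(\star)$, and your sector-by-sector verification of $(\star)$ for $(\Lr{\T},\Sm)$ using $L(g)(V)+L(g^{-1})(V)=V-V^g$ is the standard one. The virtual-rank computation from the age shift of the grading likewise matches the intended argument.
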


\section{Inertial products, Chern characters, and inertial pairs}

In this section we generalize the ideas of orbifold cohomology, obstruction bundles, 
orbifold grading
and 
the orbifold
Chern character, by defining \emph{inertial products} on $K_G(\IGX )$ and $A^*_G(\IGX )$ using inertial bundles on 
$\II_G X$.  
We further define a rational grading and a Chern character ring homomorphism  via \emph{Chern-compatible  
classes of $K_G(\IGX)_\nq$}. 

The original example of an associative bundle is the obstruction bundle 
$\R = LR(\T)$ of orbifold cohomology, and the original example of a Chern-compatible  class 
is the logarithmic trace $\cs$ of $\T$, as described in Definition~\ref{df:S}.

We show below that there are many \emph{inertial pairs} of associative inertial bundles on $\II_G X$ with Chern-compatible elements on $\IGX$, and hence there are many associative inertial products on $K_G(\IGX )$ and $A^*_G(\IGX )$ with rational gradings and Chern character ring homomorphisms.

\subsection{Associative bundles and inertial products}
We recall the following definition from \cite{EJK:10}.  
It should be noted that a similar formalism also appeared  in the paper \cite{Kau:10a}, but we will use the notation of \cite{EJK:10}.
\begin{df}
Given a class $c \in A^*_G(\II_G X)$ (resp.~$K_G(\II_G X)$), we define the \emph{inertial product 
with respect to $c$}  
to be \begin{equation} \label{eq.inertialprod}
x \star_c y := \mu_*(e_1^*x \cdot e_2^*y \cdot c),
\end{equation}
where $x,y \in A^*_G(\IGX )$ (resp.~$K_G(\IGX )$).
\end{df}
Given a vector bundle ${\R}$ on $\II_G X$ we define inertial products on $A^*_G(\IGX )$ and $K_G(\IGX )$ via
formula \eqref{eq.inertialprod}, where $c = \euler({\R})$ is the
Euler class of the bundle ${\R}$.
\begin{df}
We say that ${\R}$ is an {\em associative bundle} on $\II_G X$  if the $\star_{\euler({\R})}$ products on both $A^*_G(\IGX )$ and $K_G(\IGX )$  are commutative and associative with identity 
$\one$, 
where
$\one$
is the identity class in $A^*_G(X)$ (resp.~$K_G(X)$),
viewed as a summand in $A^*_G(\IGX )$ (resp.~$K_G(\IGX )$). 
\end{df}
\begin{prop}
A  sufficient 
condition for $\star_{\euler(\R)}$ to be commutative with identity 
${\one}$
is that 
the following conditions be satisfied.
\begin{enumerate}
\item \begin{equation} \label{eq.identity}
{\R}|_{I(\Phi)} = {\mathscr O}
\end{equation} for every conjugacy class
$\Phi\subset G \times G$ such that $e_1(\Phi) = 1$ or $e_2(\Phi) =1$.

\item \begin{equation} \label{eq.commutative}
i^* {\R} = {\R},
\end{equation}
where $i \colon \II_G X \rTo \II_G X$ is the 
isomorphism
 $i(m_1,m_2,x) = (m_1m_2m_1^{-1},m_1,x)$.

\end{enumerate}
\end{prop}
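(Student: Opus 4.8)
The plan is to prove the two conditions separately, establishing commutativity of $\star_{\euler(\R)}$ first and the identity property second, since they rely on distinct geometric inputs.

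For \textbf{commutativity}, the key observation is that the isomorphism $i \colon \II_G X \rTo \II_G X$ given by $i(m_1,m_2,x) = (m_1m_2m_1^{-1}, m_1, x)$ interchanges the roles of the first two evaluation morphisms in the appropriate sense. First I would check that $e_1 \circ i = e_{\mu}$-type relations hold: since $i$ sends $(m_1,m_2,x)$ to $(m_1m_2m_1^{-1},m_1,x)$, one computes $e_1 \circ i = e_3'$ and $e_2 \circ i = e_1$, where I track how the evaluation and multiplication maps transform. The crucial point is that $\mu \circ i = \mu$ as morphisms to $I_G X$, because $(m_1m_2m_1^{-1})(m_1) = m_1 m_2$, so the product class is preserved. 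Then, starting from the definition $x \star y = \mu_*(e_1^* x \cdot e_2^* y \cdot \euler(\R))$, I would apply the projection formula and the fact that $i$ is an isomorphism (hence $i_* i^* = \id$ and $i$ is its own inverse up to the relevant identification) to rewrite $y \star x$ as a pushforward over $\II_G X$ pulled back through $i$. Using hypothesis \eqref{eq.commutative}, namely $i^* \R = \R$, so that $i^* \euler(\R) = \euler(i^*\R) = \euler(\R)$, the obstruction-bundle factor is unchanged, and the two products coincide. This step works identically in $K$-theory and in Chow theory because the projection formula and the compatibility of Euler classes with pullback hold in both.

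For the \textbf{identity property}, I would verify that $x \star \one = x$ for all $x \in A^*_G(\IGX)$ (and likewise in $K$-theory). Here $\one$ is supported on the untwisted sector $X = X^1 \subset \IGX$, i.e.\ on the component where the second group element is $1$. The product $x \star \one = \mu_*(e_1^* x \cdot e_2^* \one \cdot \euler(\R))$ localizes, via the decomposition of Proposition~\ref{prop.inertiadecomp}, to those components $\Imult{2}(\Phi)$ of $\II_G X$ where $e_2(\Phi) = 1$. On such components, hypothesis \eqref{eq.identity} gives $\R|_{\Imult{2}(\Phi)} = \co$, the trivial (rank-zero) bundle, so its Euler class $\euler(\R) = 1$ is the multiplicative identity. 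On the locus $e_2 = 1$ the maps $e_1$ and $\mu$ both become the identity on $I_G X$, so $\mu_*(e_1^* x \cdot 1) = x$ reduces to an identity pushforward. The symmetric argument with $e_1(\Phi)=1$, combined with the commutativity already established, then gives $\one \star x = x$.

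The \textbf{main obstacle} I anticipate is the careful bookkeeping of how the evaluation maps $e_1, e_2, \mu$ and the involution $i$ interact over the disconnected inertia stack, and in particular verifying the precise geometric identities $\mu \circ i = \mu$ and the behavior of the components indexed by diagonal conjugacy classes $\Phi$ with $e_1(\Phi)=1$ or $e_2(\Phi)=1$. One must confirm that the pushforward $\mu_*$ restricted to these special sectors is genuinely an isomorphism onto the target sector (not merely a degree-preserving map), which uses that on the locus $m_2 = 1$ the multiplication and first-evaluation morphisms agree as maps $X^{m_1,1} \to X^{m_1}$. Since the proposition only claims \emph{sufficiency} for commutativity and the identity, I would not need to address associativity here, which keeps the argument within reach: the whole proof reduces to the projection formula, the invariance $i^*\R = \R$, and the vanishing-to-trivial condition \eqref{eq.identity} on the boundary sectors.
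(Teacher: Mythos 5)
Your argument is correct, but it is worth noting that the paper itself gives no computation here: its ``proof'' consists of citing Propositions 3.7--3.9 of \cite{EJK:10} and correcting the description of the map $i$ from the naive swap $(m_1,m_2)\mapsto(m_2,m_1)$ to the braiding map $(m_1,m_2,x)\mapsto(m_1m_2m_1^{-1},m_1,x)$. What you have written out is essentially the content of those cited propositions, so in substance you are reconstructing the outsourced argument rather than taking a different route. Both halves of your sketch are sound: for the identity, on a component with $e_2(\Phi)=1$ the maps $\mu$ and $e_1$ literally coincide as the isomorphism $(m_1,1,x)\mapsto(m_1,x)$ onto $I(\Psi_1)$, the restriction of $\R$ is trivial so $\euler(\R)=1$, and $\mu_*e_1^*=\id$; the $e_1(\Phi)=1$ case plus commutativity handles $\one\star x$. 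For commutativity, the one point your bookkeeping glosses over --- and it is exactly the point behind the paper's correction --- is that $e_1\circ i$ is \emph{not} equal to $e_2$ on the nose: $e_1(i(m_1,m_2,x))=(m_1m_2m_1^{-1},x)$ rather than $(m_2,x)$. These differ by the action of $m_1$, which stabilizes $x$, so the two maps are $2$-isomorphic as maps of quotient stacks and induce the same pullback on $G$-equivariant Chow and $K$-theory; similarly $i$ squares to the identity only up to the stabilizer action. With that understood, your chain $y\star x=\mu_*\bigl(i^*(e_1^*y\cdot e_2^*x\cdot\euler(\R))\bigr)=\mu_*(e_2^*y\cdot e_1^*x\cdot\euler(\R))=x\star y$, using $\mu\circ i=\mu$ and $i^*\R=\R$, goes through. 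So: no gap, but the equivariant (stacky) identification of $e_1\circ i$ with $e_2$ deserves to be made explicit, since it is the very subtlety the proposition is correcting relative to \cite{EJK:10}.
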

\begin{proof}
This is almost just a restatement of Propositions 3.7--3.9 in \cite{EJK:10}.
However, we note that in Proposition~3.9 of \cite{EJK:10} there is a slight error---that proposition incorrectly stated that the map $i\colon\Itwo_G X \rTo \Itwo_G X$ was the map induced by the naive involution $(m_1,m_2) \mapsto (m_2,m_1)$, rather than the correct ``braiding map'' $(m_1,m_2,x) \mapsto (m_1m_2m_1^{-1},m_1,x)$.    
\end{proof}
A sufficient condition for associativity is also given in
\cite{EJK:10}.  To state the condition requires some notation which we
recall from that paper.  
Let $(m_1, m_2,m_3) \in G^3$ such that $m_1m_2m_3 = 1$, and let
$\Phi_{1,2,3}\subset G^3$ be its diagonal conjugacy class. 
Let $\Phi_{12,3}$
be the conjugacy class of $(m_1m_2,m_3)$ and $\Phi_{1,23}$ the
conjugacy class of $(m_1,m_2m_3)$.  Let $\Phi_{i,j}$ be the conjugacy
class of the pair $(m_i,m_j)$ with $i < j$. Finally let $\Psi_{123}$
be the conjugacy class of $m_1m_2m_3$; let $\Psi_{ij}$
be the conjugacy class of $m_im_j$; and let $\Psi_{i}$
be the conjugacy class of $m_i$.  There are evaluation maps
$e_1\colon \II(\Phi_{a,b})\rTo I(\Psi_{a}),$ $e_2\colon \II(\Phi_{a,b})\rTo I(\Psi_{b}),$ 
$e_{i,j} \colon {\Imult3}(\Phi_{1,2,3}) \rTo \II(\Psi_{i,j})$ ,
and 
the composition maps $\mu_{12,3} \colon {\Imult3}(\Phi_{1,2,3}) \rTo
\II(\Phi_{12,3}),$  
and $\mu_{1,23} \colon {\Imult3}(\Phi_{1,2,3}) \rTo\II(\Phi_{1,23})$.
The various maps we have defined are related by the
following Cartesian diagrams of l.c.i.~morphisms:

\begin{equation} \label{diag.excess12}
\begin{diagram}
{\Imult3}(\Phi_{1,2,3}) &\rTo^{e_{1,2}} & \II(\Phi_{1,2}) \\
\dTo^{\mu_{12,3}}  & &  \dTo_{\mu}\\
\II(\Phi_{12,3}) & \rTo^{e_1} & I(\Psi_{12})
\end{diagram}
\qquad 
\begin{diagram}
{\Imult3}(\Phi_{1,2,3}) &\rTo^{e_{2,3}}  & \II(\Phi_{2,3}) \\
\dTo^{\mu_{1,23}} & & \dTo_{\mu}\\
\II(\Phi_{1,23}) & \rTo^{e_2} & I(\Psi_{23})
\end{diagram}
\end{equation}
Let $E_{1,2}$ and $E_{2,3}$ be the respective excess normal bundles of the two diagrams \eqref{diag.excess12}.
\begin{prop}
Let ${\R}$ be a vector bundle on $\II_GX$ satisfying \eqref{eq.identity}
and \eqref{eq.commutative}. A sufficient condition for ${\R}$ to be an associative bundle is if
\begin{equation} \label{eq.neccasso} 
e_{1,2}^*{\R} + \mu_{12,3}^*{\R} + E_{1,2} = 
e_{2,3}^*{\R} + \mu_{1,23}^*{\R} + E_{2,3}
\end{equation}
in $K_G({\Imult3}X)$
\end{prop}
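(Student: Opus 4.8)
The plan is to reduce the statement to associativity. Commutativity and the identity property already follow from \eqref{eq.identity}, \eqref{eq.commutative}, and the preceding proposition, so the only thing to prove is that $(x\star y)\star z = x\star(y\star z)$ for all $x,y,z$, where I abbreviate $\star=\star_{\euler(\R)}$. Following the associativity argument for the orbifold product in \cite{EJK:10}, the strategy is to rewrite each of the two triple products as a single push-forward along the total multiplication map $\mu^{(3)}\colon\Imult3(\Phi_{1,2,3})\rTo I(\Psi_{123})$, $(m_1,m_2,m_3,x)\mapsto(m_1m_2m_3,x)$, and then to compare the two integrands. Throughout, let $p_i\colon\Imult3(\Phi_{1,2,3})\rTo I(\Psi_i)$ denote evaluation $(m_1,m_2,m_3,x)\mapsto(m_i,x)$.

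First I would expand $(x\star y)\star z=\mu_*\big(e_1^*(x\star y)\cdot e_2^* z\cdot\euler(\R)\big)$ over $\II(\Phi_{12,3})$ and substitute $x\star y=\mu_*\big(e_1^* x\cdot e_2^* y\cdot\euler(\R)\big)$. The key move is to rewrite $e_1^*\mu_*$ using base change in the left-hand Cartesian square of \eqref{diag.excess12}: since both squares consist of l.c.i.\ morphisms, $\mu$ is finite, and $E_{1,2}$ is the excess normal bundle, the excess-intersection formula (valid in both $A^*_G$ and $K_G$) gives
\begin{equation*}
e_1^*\mu_*\alpha=\mu_{12,3\,*}\big(e_{1,2}^*\alpha\cdot\euler(E_{1,2})\big).
\end{equation*}
Combining this with the projection formula, the functoriality $\mu\circ\mu_{12,3}=\mu^{(3)}$, and the factorizations $p_1=e_1\circ e_{1,2}$, $p_2=e_2\circ e_{1,2}$, $p_3=e_2\circ\mu_{12,3}$, and collecting the three Euler factors by multiplicativity, I obtain
\begin{equation*}
(x\star y)\star z=\mu^{(3)}_*\Big(p_1^* x\cdot p_2^* y\cdot p_3^* z\cdot\euler\big(e_{1,2}^*\R+\mu_{12,3}^*\R+E_{1,2}\big)\Big).
\end{equation*}
Running the identical computation with the right-hand square of \eqref{diag.excess12} (and the analogous factorizations $p_1=e_1\circ\mu_{1,23}$, $p_2=e_1\circ e_{2,3}$, $p_3=e_2\circ e_{2,3}$) yields
\begin{equation*}
x\star(y\star z)=\mu^{(3)}_*\Big(p_1^* x\cdot p_2^* y\cdot p_3^* z\cdot\euler\big(e_{2,3}^*\R+\mu_{1,23}^*\R+E_{2,3}\big)\Big),
\end{equation*}
with the same triple $p_1^* x\cdot p_2^* y\cdot p_3^* z$ out front.

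Finally I would invoke hypothesis \eqref{eq.neccasso}. Both the Chow Euler class $\ctop$ and the $K$-theoretic Euler class $\lambda_{-1}((\,\cdot\,)^*)$ are multiplicative and depend only on the underlying $K$-theory class: the total Chern class $c$ and the operation $\lambda_t$ are homomorphisms out of $(K_G,+)$, so two non-negative classes that agree in $K_G(\Imult3 X)$ (hence have equal rank on each component) have equal total Chern classes and equal $\lambda_t$, whence equal Euler classes in both theories. Thus \eqref{eq.neccasso} forces the two integrands above to coincide, giving $(x\star y)\star z=x\star(y\star z)$ and proving associativity.

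The main technical point to nail down is the base-change/excess-intersection identity $e_1^*\mu_*=\mu_{12,3\,*}\big(e_{1,2}^*(\,\cdot\,)\cdot\euler(E_{1,2})\big)$ (and its $e_{2,3}$ analogue) for the Cartesian squares of \eqref{diag.excess12}, simultaneously in equivariant Chow groups and in equivariant $K$-theory; this is where the l.c.i.\ hypothesis and the precise identification of $E_{1,2},E_{2,3}$ as the excess normal bundles are used, and it is exactly the input established in \cite{EJK:10}. A secondary point worth stating explicitly is the $K$-theoretic invariance of the Euler class, which is what allows the single identity \eqref{eq.neccasso} in $K_G(\Imult3 X)$ to control the product in both cohomology theories at once.
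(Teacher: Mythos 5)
Your proof is correct and takes essentially the same approach as the paper: the paper's proof simply cites the proof of Proposition 3.12 of \cite{EJK:10}, which is exactly the excess-intersection computation you write out, together with the observation that the Euler class takes a sum of bundles to a product of Euler classes. That multiplicativity remark is precisely your final step deducing equality of the two integrands from the $K$-theoretic identity \eqref{eq.neccasso}.
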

\begin{proof}
This follows from the proof of Proposition 3.12 \cite{EJK:10}, since the Euler
class takes a sum of bundles to a product of Euler classes.
\end{proof}
In practice, the only way we have to show that a bundle ${\R}$
is associative is to show that it satisfies the identity
\eqref{eq.neccasso}. This leads to our next definition.
\begin{df}
A bundle ${\R}$ is \emph{strongly associative} if it satisfies the identities 
\eqref{eq.identity}, 
\eqref{eq.commutative}, and 
\eqref{eq.neccasso}.
\end{df}

\subsection{Chern characters, age, and inertial pairs}

In many cases one can define a Chern character $K_G(\IGX )_\Q \rTo A^*_G(\IGX
)_\Q$ which is a \emph{ring homomorphism} with respect to the inertial
product.  To do this, however, we need to define a \emph{Chern compatible
  class $\cs \in K_G(\IGX)$}.  As an added bonus, such a class will also
allow us to define a new grading on $A^*_G(\IGX)$ compatible with the
inertial product and  analogous to the 
orbifold grading
of orbifold cohomology.  

\begin{df}
Let ${\R}$ be an associative bundle on $\II_G X$. A non-negative
class 
${\cs}\in K_G(\IGX )_\nq$ 
is called ${\R}$-Chern compatible if the map 
$$\inCh \colon K_G(\IGX )_\Q \rTo A^*_G(\IGX )_\Q,$$ defined by 
$$\inCh(V) = \Ch(V)
\cdot
\Td(-{\cs})$$ 
is a ring homomorphism with respect to the 
${\R}$-inertial products on $K_G(\IGX )$ and $A^*_G(\IGX )$. 
\end{df}

\begin{rem}
  Again, the original example of a Chern compatible class is the class
  ${\cs}$ defined in \cite{JKK:07}, but we will we see other
  examples below.\end{rem}
\begin{prop}
  If ${\R}$ is an associative vector bundle on
  $\II_G X$, then a non-negative class ${\cs} \in
  K_G(\IGX )_{Q}$ is ${\R}$-Chern compatible if the following identity
  holds in $K_G(\II_G X)$
\begin{equation} \label{eq.chernidentity}
{\R} = e_1^* {\cs} + e_2^* {\cs} - \mu^*{\cs} + T_{\mu}.
\end{equation}
\end{prop}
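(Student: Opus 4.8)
The plan is to verify directly that $\inCh$ is multiplicative, that is, that
$\inCh(x\star_{\euler(\R)}y)=\inCh(x)\star_{\euler(\R)}\inCh(y)$ for all $x,y\in K_G(\IGX)_\nq$, where the left-hand product is the K-theoretic inertial product of \eqref{eq.inertialprod} (with $c=\lambda_{-1}(\R^*)$) and the right-hand product is the Chow-theoretic one (with $c=\ctop(\R)$). Two classical facts drive the computation. First, Grothendieck--Riemann--Roch for the l.c.i.\ morphism $\mu\colon\II_G X\rTo\IGX$ gives $\Ch(\mu_*\alpha)=\mu_*\!\left(\Ch(\alpha)\cdot\Td(T_\mu)\right)$, where $T_\mu$ is the virtual relative tangent complex. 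Second, the splitting principle yields, for any vector bundle $E$, the identity $\Ch(\lambda_{-1}(E^*))=\ctop(E)\cdot\Td(E)^{-1}$; applied to $E=\R$, this is the bridge between the two inertial products.

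First I would expand the left-hand side. Starting from \eqref{eq.inertialprod}, applying GRR, and then using that the ordinary Chern character is a ring homomorphism together with $\Ch(\lambda_{-1}(\R^*))=\ctop(\R)\,\Td(\R)^{-1}$, I obtain
\begin{equation*}
\Ch(x\star_{\euler(\R)}y)=\mu_*\!\left(e_1^*\Ch(x)\cdot e_2^*\Ch(y)\cdot\ctop(\R)\cdot\Td(\R)^{-1}\cdot\Td(T_\mu)\right).
\end{equation*}
Multiplying by $\Td(-\cs)$ and moving this factor inside $\mu_*$ by the projection formula (where it becomes $\mu^*\Td(-\cs)=\Td(-\mu^*\cs)$) gives a single pushforward for $\inCh(x\star_{\euler(\R)}y)$. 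On the other side, unwinding the Chow inertial product of $\inCh(x)=\Ch(x)\,\Td(-\cs)$ and $\inCh(y)=\Ch(y)\,\Td(-\cs)$ and using $e_i^*\Td(-\cs)=\Td(-e_i^*\cs)$ yields
\begin{equation*}
\inCh(x)\star_{\euler(\R)}\inCh(y)=\mu_*\!\left(e_1^*\Ch(x)\cdot e_2^*\Ch(y)\cdot\Td(-e_1^*\cs)\cdot\Td(-e_2^*\cs)\cdot\ctop(\R)\right).
\end{equation*}

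The two integrands share the factor $e_1^*\Ch(x)\cdot e_2^*\Ch(y)\cdot\ctop(\R)$, so the proof reduces to the identity of Todd factors $\Td(-\mu^*\cs)\cdot\Td(\R)^{-1}\cdot\Td(T_\mu)=\Td(-e_1^*\cs)\cdot\Td(-e_2^*\cs)$. By multiplicativity, $\Td(A+B)=\Td(A)\Td(B)$ and $\Td(-A)=\Td(A)^{-1}$, this is equivalent to $\Td\!\left(T_\mu-\R-\mu^*\cs\right)=\Td\!\left(-e_1^*\cs-e_2^*\cs\right)$, and hence holds once $T_\mu-\R-\mu^*\cs=-e_1^*\cs-e_2^*\cs$ in $K_G(\II_G X)_\nq$, which is precisely the hypothesis \eqref{eq.chernidentity} after rearrangement. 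Thus the two integrands coincide verbatim, so no cancellation of the possibly zero-dividing class $\ctop(\R)$ is required, and $\inCh$ is multiplicative. Unit preservation is then immediate: restricting \eqref{eq.chernidentity} to the sector $m_1=m_2=1$, which is the untwisted copy $X\subset\IGX$ where $e_1=e_2=\mu=\id$, $T_\mu=0$, and $\R$ has trivial Euler class by \eqref{eq.identity}, forces $\cs|_X=0$, so $\Td(-\cs)|_X=1$ and $\inCh(\one)=\Ch(\co_X)=\one$.

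The main obstacle is not the algebra, which is purely formal, but making the two classical inputs rigorous in the present equivariant, stacky, and \emph{rational} setting: GRR must be applied to the l.c.i.\ morphism $\mu$ between quotient stacks with $T_\mu$ its virtual relative tangent complex, and both $\Td$ and $\Ch$ must be extended to the rational class $\cs\in K_G(\IGX)_\nq$ so that $\Td(-\cs)$ and $\Td(-\mu^*\cs)$ are defined and remain multiplicative under pullback. Once these are in place, the content of the Proposition is exactly that \eqref{eq.chernidentity} is the relation making the spurious $\Td(T_\mu)$ and $\Td(\R)^{-1}$ factors introduced by GRR and the Euler-class comparison recombine into the sector-wise corrections $\Td(-e_i^*\cs)$.
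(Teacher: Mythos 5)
Your proof is correct and is essentially the argument the paper itself invokes: the paper's proof simply defers to ``the same formal argument used in the proof of \cite[Theorem 7.3]{EJK:07}''-style reasoning from \cite{EJK:10}, which is exactly the Riemann--Roch computation for $\mu$ combined with $\Ch(\lambda_{-1}(\R^*))=\ctop(\R)\Td(\R)^{-1}$ and the matching of Todd factors that you carry out. You have merely written out in full what the paper cites, including the correct observation that \eqref{eq.chernidentity} is precisely the relation making the two integrands agree termwise.
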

\begin{proof}
This follows from the same formal argument used in the proof of \cite[Theorem 7.3]{EJK:10}.
\end{proof}

\begin{df}
A class ${\cs} \in K_G(\IGX )_\Q$ is {\em strongly}   ${\R}$-Chern compatible if it
satisfies 
Equation
\eqref{eq.chernidentity}. 
  
  A pair $({\R}, {\cs})$ is
  an {\em inertial pair} if ${\R}$ is a strongly associative
  bundle and ${\cs}$ is ${\R}$-strongly Chern
  compatible. 
\end{df}
\begin{df}
  We define the ${\cs}$-{\em age} on a 
connected
component 
$[U/G]$
of
  $I\ix $ 
to be the rational rank of ${\cs}$ on the component 
$[U/G]$:
  $$\age_{\cs}(
[U/G]
) = \rk(\cs)_{
[U/G]
}.$$
   We define the ${\cs}$-degree of an element $x \in
  A^*_G(\IGX )$ 
  on 
such a component
$U$ of $\IGX$
   to be 
           $$\res{\deg_\cs x}{U}  = \res{\deg x}{U} + \age_{\cs}(
[U/G]
),$$
   where $\deg x$ is
  the degree with respect to the usual grading by codimension on
  $A^*_G(\IGX )$.
Similarly, if $\cf$ in $K_G(\IGX)$ 
is an element supported on $U$, 
then its $\cs$\!\!-degree 
is
\[
\deg_\cs \cf = \age_{\cs}(U)\bmod\nz.
\]
This yields a $\nq/\nz$-grading of the group $K_G(\IGX)$.

\end{df}

\begin{prop}
  If ${\R}$ is an associative vector bundle on
  $\II_G X$ and ${\cs} \in K_G(\IGX )_\Q$ is strongly
  ${\R}$-Chern compatible, then the ${\R}$-inertial
  products on $A^*_G(\IGX)$ and $K_G(\IGX)$ respect the $\cs$\!\!-degrees. Furthermore, the inertial Chern character homomorphism $\inCh:K_G(\IGX)\rTo A_G^*(\IGX)_\nq$ preserves the $\cs$\!\!-degree modulo $\nz$.
\end{prop}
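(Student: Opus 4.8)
The plan is to reduce both assertions to a single scalar identity obtained by applying the virtual-rank (augmentation) homomorphism to the strong Chern-compatibility equation \eqref{eq.chernidentity}, and then to bookkeep how the three operations $e_i^*$, multiplication by $\euler(\R)$, and $\mu_*$ in the inertial product \eqref{eq.inertialprod} shift the ordinary Chow codimension. Throughout I would argue componentwise, using the decomposition of $K_G(\IGX)$ and $A^*_G(\IGX)$ over conjugacy classes from Proposition \ref{prop.inertiadecomp}.

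Fix a diagonal conjugacy class $\Phi\subset G\times G$ with $e_1(\Phi)=\Psi_1$, $e_2(\Phi)=\Psi_2$, and $\mu(\Phi)=\Psi_{12}$, and consider the contribution of $\II(\Phi)$ to a product $x\star y$, where $x$ is supported on $I(\Psi_1)$ and $y$ on $I(\Psi_2)$. Writing $a_i=\age_\cs(\Psi_i)=\rk(\cs|_{I(\Psi_i)})$, applying the virtual rank to \eqref{eq.chernidentity} restricted to $\II(\Phi)$ and using that pullback preserves rank yields the scalar identity
\begin{equation*}
\rk(\R)=a_1+a_2-\age_\cs(\Psi_{12})+\rk(T_\mu),
\end{equation*}
where $\rk(T_\mu)$ is the relative dimension of $\mu\colon\II(\Phi)\rTo I(\Psi_{12})$. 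For the Chow statement I would then note that if $x$ and $y$ have ordinary degrees $d_1$ and $d_2$, the class $e_1^*x\cdot e_2^*y$ has codimension $d_1+d_2$, multiplication by $\euler(\R)=\ctop(\R)$ raises codimension by $\rk(\R)$, and $\mu_*$ lowers it by $\rk(T_\mu)$; hence $x\star y$ has $\cs$-degree
\[
d_1+d_2+\rk(\R)-\rk(T_\mu)+\age_\cs(\Psi_{12}),
\]
which by the scalar identity equals $(d_1+a_1)+(d_2+a_2)=\deg_\cs x+\deg_\cs y$.

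For $K$-theory the only relevant datum is the age modulo $\nz$. Since $\rk(\R)$ and $\rk(T_\mu)$ are integers, the same scalar identity gives $\age_\cs(\Psi_{12})\equiv a_1+a_2\pmod{\nz}$, so the inertial product respects the $\nq/\nz$-grading. For the Chern-character claim I would observe that on a component $U$ with $\age_\cs(U)=a$, writing $\Ch(V)=\sum_j\Ch^j(V)$ and $\Td(-\cs)=\sum_k\Td^k(-\cs)$ by codimension, each product term $\Ch^j(V)\cdot\Td^k(-\cs)$ lies in $A^{j+k}_G(U)_\nq$ and so has $\cs$-degree $(j+k)+a$; as $j+k\in\nz$, every homogeneous piece of $\inCh(V)$ has $\cs$-degree congruent to $a$ modulo $\nz$, matching the $\cs$-degree $a\bmod\nz$ of $V$ in $K$-theory.

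I expect the main obstacle to be the degree bookkeeping for the pushforward: one must identify the relative dimension of $\mu\colon\II(\Phi)\rTo I(\Psi_{12})$ with $\rk(T_\mu)$ and verify that, in the equivariant (stacky) setting, proper pushforward lowers codimension by exactly this relative dimension on each component. Once that identification is secured, both statements follow formally from the single augmentation of \eqref{eq.chernidentity}, exactly as in the proof of \cite[Thm.~7.3]{EJK:10}.
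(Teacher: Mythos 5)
Your proposal is correct and follows essentially the same route as the paper's proof: take virtual ranks in the strong Chern-compatibility identity \eqref{eq.chernidentity} and track how $e_i^*$, multiplication by $\euler(\R)$, and $\mu_*$ shift the ordinary grading, then note that $\rk\R$ and $\rk T_\mu$ are integers for the $K$-theory and Chern-character statements. You are merely more explicit about the componentwise decomposition and the pushforward bookkeeping (and your sign convention $\deg\mu_*\alpha=\deg\alpha-\rk T_\mu$, with $T_\mu$ the virtual relative tangent of non-positive rank, is the consistent one), so no further changes are needed.
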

\begin{proof}
If $x, y \in A^*_G(\IGX )$, then the formula
$$
x \star_{\euler({\R})}  y = \mu_*\left(e_1^*x \cdot e_2^* y \cdot 
\euler({\R})\right)
$$
implies that 
$\deg(x \star_{\euler({\R})}
y) = \deg x + \deg y + \rk {\R} + \rk T_\mu.$
Since ${\cs}$ is strongly ${\R}$-Chern compatible,
we know that
${\R} = e_1^*{\cs} + e_2^*{\cs} -\mu^*{\cs}
+ T_{\mu}$. Comparing ranks shows that the ${\cs}$-degree of
$x\star_{\euler({\R})} y$ 
is the sum of the ${\cs}$-degrees
of $x$ and $y$.
The proof for $K_G(\IGX)$ follows from the fact that $\rk{\R}$ and $\rk T_\mu$ are integers. Finally, $\inCh$ preserves the ${\cs}$-degree mod $\nz$ since if $\cf$ in $K_G(\IGX)$ is supported on $U$, where $[U/G]$ is a connected component of $[\IGX/G]$, then so is its inertial Chern character.
\end{proof}

\begin{df}
Let $A_G^{\bra{q}}(\IGX)$ be the subspace in $A_G^*(\IGX)$ of elements with an $\cs$\!\!-degree of $q$.
\end{df}

\begin{df}
Given a class ${\cs}\in K_G(\IGX )_\nq$, 
the restricted homomorphism  
$\inCh^0:K_G(\IGX)\rTo A_G^{\bra{0}}(\IGX)$ is called the
\emph{inertial virtual rank (or inertial augmentation)} 
for $\cs$\!\!. 
\end{df}

\begin{df}
    An inertial pair $({\R}, {\cs})$ is
  called {\em Gorenstein} if ${\cs}$ has integral virtual rank
and {\em strongly Gorenstein} if ${\cs}$ is represented by a vector
  bundle.

The Deligne-Mumford stack ${\stack X} = [X/G]$ is {\em strongly Gorenstein}
if the inertial pair $({\R =LR(\T)},{\cs})$ associated to the orbifold product (as in Definitions~\ref{df:OrbifProd} and \ref{df:S}) is strongly Gorenstein.
\end{df}
\begin{comment}
As discussed in Definition~\ref{def:ChRk}, the rank of any 
element $\cf \in K_G(\IGX )_\Q$  is a locally constant function from $\IGX$ into $\Q$, that is, it gives a well-defined rational number on each connected component of $\IGX$.
\end{comment}

\section{Inertial pairs associated to vector bundles}
In this section we show how, for each choice of $G$-equivariant bundle
$V$ on $X$, we can use the methods of \cite{EJK:10} to define two new
inertial pairs 
$(\crr^+V, \cs^+V)$ and $(\crr^-V,\cs^-V)$. 
We thus obtain corresponding inertial
products and Chern characters.  We denote the corresponding products
associated to a vector bundle $V$ as the $\star_{V^+}$ and
$\star_{V^-}$ products. The $\star_{V^+}$ 
product
can be interpreted as an
orbifold product on the total space of $V$ while the $\star_{V^-}$
product on the Chow ring is 
a sign twist of the $\star_{(V^*)^+}$
product. 
Moreover, the two products induce isomorphic ring structures
on $A^*(I\ix)\otimes {\mathbb C}$.
We prove that if $V = {\mathbb T}$ is the tangent bundle to
$\ix = [X/G]$, then the $\star_{V^-}$ product agrees with the virtual
orbifold product defined by \cite{GLSUX:07}.

To define the inertial pairs associated to a vector bundle, 
we introduce a variant of
the logarithmic restriction introduced in \cite{EJK:10}. We begin with a simple proposition.
\begin{prop} \label{prop.integral+-}
Let $G$ be an algebraic group acting on a variety $X$ and suppose that
$g_1, g_2$ lie in a common compact subgroup. Let 
$Z = Z_G(g_1,g_2)$
be the centralizer of $g_1$ and $g_2$ in $G$.

The virtual bundles
\begin{equation} \label{eq.vplus}
V^+(g_1,g_2) = L(g_1)(V|_{X^{g_1,g_2}}) + L(g_2)(V|_{X^{g_1,g_2}}) 
- L(g_1g_2)(V|_{X^{g_1,g_2}}) 
\end{equation}
and 
\begin{equation} \label{eq.vminus}
V^-(g_1,g_2) = L(g_1^{-1})(V|_{X^{g_1,g_2}}) + L(g_2^{-1})(V|_{X^{g_1,g_2}}) 
- L(g_2^{-1}g_1^{-1})(V|_{X^{g_1,g_2}}) 
\end{equation}
are represented by non-negative integral elements in 
$K_Z(X^{g_1,g_2})$.
\end{prop}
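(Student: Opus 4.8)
The plan is to derive both statements from the integrality already built into Proposition~\ref{prop.logrestriction}, after a single algebraic manipulation. The first thing I would observe is that the $-$ case reduces to the $+$ case. Writing $V' := V|_{X^{g_1,g_2}}$ and using $g_2^{-1}g_1^{-1} = (g_1g_2)^{-1}$, the defining formula \eqref{eq.vminus} reads
\begin{equation*}
V^-(g_1,g_2) = L(g_1^{-1})(V') + L(g_2^{-1})(V') - L(g_2^{-1}g_1^{-1})(V') = V^+(g_2^{-1},g_1^{-1}).
\end{equation*}
Since $g_2^{-1},g_1^{-1}$ lie in the same compact subgroup as $g_1,g_2$, since $Z_G(g_2^{-1},g_1^{-1}) = Z_G(g_1,g_2) = Z$, and since $X^{g_2^{-1},g_1^{-1}} = X^{g_1,g_2}$, it will suffice to prove the assertion for $V^+(g_1,g_2)$ and then specialize.

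The key elementary identity I would use is that for any unitary automorphism $g$ of a bundle $E$ one has $L(g^{-1})(E) = E - E^g - L(g)(E)$, where $E^g$ is the $g$-invariant subbundle. This is immediate from the eigenvalue bookkeeping of Definition~\ref{def.logtrace}: on an eigenbundle with eigenvalue $\exp(2\pi\sqrt{-1}\,\alpha_k)$, $0\le\alpha_k<1$, the inverse $g^{-1}$ acts with exponent $1-\alpha_k$ when $\alpha_k\neq 0$ and with exponent $0$ when $\alpha_k=0$, so summing gives $\sum_{\alpha_k\neq 0}(1-\alpha_k)E_k = (E - E^g) - L(g)(E)$.

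Next I would apply Proposition~\ref{prop.logrestriction} to the triple $\mathbf g = (g_1,g_2,(g_1g_2)^{-1})$, whose product is the identity and whose common fixed locus is exactly $X^{g_1,g_2}$. By the construction of $V(\mathbf g)$, which Proposition~\ref{prop.logrestriction} guarantees is a $Z$-equivariant vector bundle, we have
\begin{equation*}
V(\mathbf g) = L(g_1)(V') + L(g_2)(V') + L((g_1g_2)^{-1})(V') - V' + V^{\mathbf g},
\end{equation*}
where $V^{\mathbf g}$ is the $\langle g_1,g_2\rangle$-invariant subbundle of $V'$. Substituting the identity $L((g_1g_2)^{-1})(V') = V' - (V')^{g_1g_2} - L(g_1g_2)(V')$ and cancelling the two copies of $V'$ yields
\begin{equation*}
V(\mathbf g) = V^+(g_1,g_2) - (V')^{g_1g_2} + V^{\mathbf g},
\qquad\text{hence}\qquad
V^+(g_1,g_2) = V(\mathbf g) + \big((V')^{g_1g_2} - V^{\mathbf g}\big).
\end{equation*}

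To finish I would check that both summands on the right are non-negative integral classes in $K_Z(X^{g_1,g_2})$. The term $V(\mathbf g)$ is a genuine $Z$-bundle by the previous paragraph. For the correction term, I would note that $\langle g_1g_2\rangle \subseteq \langle g_1,g_2\rangle$, so $V^{\mathbf g}$ is a $Z$-subbundle of the $g_1g_2$-invariant subbundle $(V')^{g_1g_2}$; by reductivity of $Z$ (equivalently, unitarity of the compact-group action) this inclusion splits, so $(V')^{g_1g_2} - V^{\mathbf g}$ is represented by an honest $Z$-bundle and is non-negative integral. Summing the two gives the result for $V^+$, and the reduction of the first paragraph then gives it for $V^-$. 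The one step requiring real care -- rather than formal bookkeeping -- is verifying the hypotheses needed to invoke Proposition~\ref{prop.logrestriction} on $X^{g_1,g_2}$: that $Z$ centralizes the finite group $\langle g_1,g_2\rangle$ (which acts trivially on $X^{g_1,g_2}$), so that $V'$ is genuinely $(Z\times\langle g_1,g_2\rangle)$-equivariant, together with the subbundle comparison $V^{\mathbf g}\subseteq (V')^{g_1g_2}$ that supplies the non-negativity.
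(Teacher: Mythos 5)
Your proposal is correct and follows essentially the same route as the paper's proof: reduce the $-$ case to the $+$ case via $V^-(g_1,g_2)=V^+(g_2^{-1},g_1^{-1})$, use the identity $L(g)(E)+L(g^{-1})(E)=E-E^g$ to rewrite $V^+(g_1,g_2)$ as $L(g_1,g_2,g_3)(V)+\bigl(V^{g_1g_2}-V^{g_1,g_2}\bigr)$ with $g_3=(g_1g_2)^{-1}$, and invoke Proposition~\ref{prop.logrestriction} together with the inclusion $V^{g_1,g_2}\subseteq V^{g_1g_2}$. The only difference is that you spell out the non-negativity of the correction term and the equivariance hypotheses more explicitly than the paper does.
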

\begin{proof}
  Since $X^{g} = X^{g^{-1}}$ and $V^-(g_1,g_2) = V^+(g_2^{-1}, g_1^{-1})$,
it suffices to show $V^+(g_1,g_2)$ is
  represented by an non-negative integral element of
  $K_Z(X^{g_1,g_2})$. Let $g_3 = (g_1g_2)^{-1}$.  The identity $L(g)(V)
    + L(g^{-1})(V) = V - V^{g}$ implies that we can rewrite
    \eqref{eq.vplus} as
\begin{equation*}
V^+(g_1,g_2) = L(g_1)(V|_{X^{g_1,g_2}}) + L(g_2)(V|_{X^{g_1,g_2}}) 
+ L(g_3)(V|_{X^{g_1,g_2}}) - V + V^{g_1,g_2} +V^{g_1g_2} -V^{g_1,g_2}.
\end{equation*}
Since $g_1g_2g_3 = 1$, by Proposition~\ref{prop.logrestriction}  
\cite[Prop 4.6]{EJK:10} the sum
\begin{equation*}
L(g_1,g_2,g_3)(V) = L(g_1)(V|_{X^{g_1,g_2}}) + L(g_2)(V|_{X^{g_1,g_2}}) 
+ L(g_3)(V|_{X^{g_1,g_2}}) - V + V^{g_1,g_2}
\end{equation*}
is represented by a non-negative integral element of 
$K^Z(X_{g_1,g_2})$.
Hence 
\begin{equation*}
V^+(g_1,g_2) = L(g_1,g_2,g_3)(V) + V^{g_1g_2} - V^{g_1,g_2}
\end{equation*}
is represented by a non-negative integral element of 
$K_Z(X^{g_1,g_2})$.
\end{proof}
Let $\Phi \subset G \times G$ be a diagonal conjugacy class. 
As in \cite{EJK:10} we may identify 
$K_G(\II(\Phi))$ with  $K_{Z_G(g_1,g_2)}(X^{g_1,g_2})$ for any $(g_1,g_2) \in \Phi$.
Thanks to Proposition \ref{prop.integral+-} we can define non-negative classes
$V^+(\Phi)$ and $V^-(\Phi)$ in  $K_G(\II(\Phi))$. The argument used in the proof of
\cite[Lemma 5.4]{EJK:10} shows that the definitions of $V^+(\Phi)$ and
$V^-(\Phi)$ are independent of the choice of $(g_1,g_2) \in 
G^2
$.
Thus we can make the following definition.
\begin{df}
Define classes $\ar^+V$ and $\ar^-V$ in $K_G(\II_G X)$ by setting the component of $\ar^+V$
(resp.~$\ar^-V$) in $K_G(\II(\Phi))$ to be $V^+(\Phi)$ (resp.~$V^-(\Phi)$).
Similarly, we define classes 
${\as}^\pm V \in K_G(\IGX )_\Q$ by setting
the restriction of 
${\as}^\pm V$ 
to a summand $K_G(I(\Psi))$ of
$K_G(\IGX )$ to be the class Morita equivalent  to $L(g^{\pm 1})(V)
\in 
K_{Z_G(g)}(X^g)$, 
where $g \in \Psi$ is any element.
\end{df}

\begin{thm}
For any $G$-equivariant vector bundle $V$ on $X$, the pairs 
$
(\crr^+ V,\cs^+ V) =
(\Lr{\T} + {\ar^+V}, {\cs}\T + {\as}^+V)$
and 
$
(\crr^-V,\cs^-V) = 
(\Lr{\T} + {\ar^-V}, {\cs}\T + {\as}^-V)$
are inertial pairs. Hence they define associative inertial products with a
Chern character homomorphism.
\end{thm}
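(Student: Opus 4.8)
The plan is to exploit the additivity of every defining condition for an inertial pair, together with the fact, recalled in the background, that the orbifold pair $(\Lr{\T}, \cs\T)$ is already an inertial pair. Since $\crr^\pm V = \Lr{\T} + \ar^\pm V$ and $\cs^\pm V = \cs\T + \as^\pm V$, and since each of \eqref{eq.identity}, \eqref{eq.commutative}, \eqref{eq.neccasso}, and \eqref{eq.chernidentity} is linear in the bundle $\R$ and in the class $\cs$, it suffices to verify the \emph{homogeneous} versions of these conditions for $(\ar^\pm V, \as^\pm V)$ alone: the excess bundles $E_{1,2}, E_{2,3}$ in \eqref{eq.neccasso} and the term $T_\mu$ in \eqref{eq.chernidentity} are absorbed by the orbifold pair and cancel when one subtracts. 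Concretely I would establish, in $K_G(\II_G X)_\Q$, the four identities (d) $\ar^\pm V = e_1^*\as^\pm V + e_2^*\as^\pm V - \mu^*\as^\pm V$; (a) $\ar^\pm V|_{\II(\Phi)} = 0$ whenever $e_1(\Phi)=1$ or $e_2(\Phi)=1$; (b) $i^*\ar^\pm V = \ar^\pm V$; and (c) $e_{1,2}^*\ar^\pm V + \mu_{12,3}^*\ar^\pm V = e_{2,3}^*\ar^\pm V + \mu_{1,23}^*\ar^\pm V$.

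The cornerstone is (d), which is essentially tautological from the definitions of $\ar^\pm V$ and $\as^\pm V$. By construction $\as^\pm V$ restricts on $I(\Psi)$ to the Morita-equivalent class of $L(g^{\pm1})(V)$, and $e_1, e_2, \mu$ carry a representative $(g_1,g_2)$ of $\Phi$ to the conjugacy classes of $g_1$, $g_2$, and $g_1 g_2$; hence $e_1^*\as^+V + e_2^*\as^+V - \mu^*\as^+V = L(g_1)(V) + L(g_2)(V) - L(g_1g_2)(V)$, which is exactly $V^+(g_1,g_2)$ of \eqref{eq.vplus}, while the minus case uses $\mu^*\as^-V = L((g_1g_2)^{-1})(V) = L(g_2^{-1}g_1^{-1})(V)$ to match \eqref{eq.vminus}. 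Granting (d), conditions (a) and (b) follow formally. For (a), if $g_1=1$ then $e_1^*\as^\pm V = L(1)(V) = 0$ and $\mu^*\as^\pm V = e_2^*\as^\pm V$, so $\ar^\pm V = 0$; the case $g_2=1$ is symmetric. For (b) I would use that the braiding $i(m_1,m_2,x) = (m_1m_2m_1^{-1}, m_1, x)$ satisfies, as maps to the stack $I\ix$, the relations $e_1\circ i = e_2$, $e_2\circ i = e_1$, and $\mu\circ i = \mu$; here the first relation rests on the observation that $(m_1m_2m_1^{-1}, x) = m_1\cdot(m_2,x)$ is the same point of $I\ix$ because $m_1 x = x$ on $X^{m_1,m_2}$. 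Since $\as^\pm V$ is a conjugation-invariant class on $I\ix$, applying $i^*$ to (d) merely swaps the first two summands and fixes the third, yielding $i^*\ar^\pm V = \ar^\pm V$.

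Condition (c) is then a direct computation on $\Imult{3}(\Phi_{1,2,3})$ with $m_1m_2m_3 = 1$: substituting (d) and using $L(1)(V) = 0$, both sides collapse to $L(m_1^{\pm1})(V) + L(m_2^{\pm1})(V) + L(m_3^{\pm1})(V)$ restricted to $X^{m_1,m_2,m_3}$, since the $L((m_im_j)^{\pm1})(V)$ terms cancel in pairs across the two sides; adding the known identity \eqref{eq.neccasso} for $\Lr{\T}$ recovers the full associativity identity for $\crr^\pm V$. With (a)--(d) in hand, $\crr^\pm V$ is strongly associative and $\cs^\pm V$ is strongly $\crr^\pm V$-Chern compatible, so both pairs are inertial, and the closing assertion that they determine associative inertial products with a Chern character ring homomorphism is then immediate from the propositions of the previous section. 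The only non-formal inputs are the well-definedness of $\ar^\pm V$ and $\as^\pm V$ independent of representatives and the integrality and non-negativity of $\ar^\pm V$ from Proposition~\ref{prop.integral+-} (so that $\euler(\crr^\pm V)$ and the products make sense); I expect the main obstacle to be the braiding identity (b), where the conjugation in $i$ must be tracked carefully to see that $e_1\circ i$ and $e_2$ agree on the stack.
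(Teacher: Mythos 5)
Your proposal is correct and takes essentially the same route as the paper's proof: both reduce everything by additivity to the homogeneous identities for $\ar^\pm V$ and $\as^\pm V$ (your (d) is the paper's stated identity $\ar^+V = e_1^*\as^+V + e_2^*\as^+V - \mu^*\as^+V$, and your (c) is exactly the paper's cocycle identity \eqref{eq.11easiest}, which the paper likewise dismisses as a formal consequence of the definitions). The only difference is one of detail: the paper asserts conditions \eqref{eq.identity} and \eqref{eq.commutative} for $\ar^\pm V$ directly ``from their definitions,'' whereas you derive them from (d), including the correct observation that the braiding satisfies $e_2\circ i=e_1$ and $e_1\circ i=e_2$ up to conjugation by $m_1$, which acts trivially on the stack.
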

\begin{proof}
Since $\Lr{\T} = e_1^*{\cs}\T + e_2^*{\cs}\T - \mu^*{\cs}\T + T_\mu$
and $\ar^+V = e_1^*{\as}^+V + e_2^*{\as}^+V - \mu^*{\as}^+V$,
it follows that ${\cs^+V}$ is strongly ${\crr^+V}$-Chern compatible.

To complete the proof we must show that $\Lr{\T} + {\ar^+V}$ is a strongly associative bundle. From their definitions we know that $\Lr{\T}$ and $\ar^+V$ satisfy
the identities \eqref{eq.identity} and \eqref{eq.commutative}. We also know that
$\Lr{\T}$ satisfies \eqref{eq.neccasso}. Thus to prove that 
$\Lr{\T} + {\ar^+V}$, it suffices to show that $\ar^+V$ satisfies the ``cocycle'' condition
\begin{equation} \label{eq.assocsuffice}
e_{1,2}^*\ar^+V + \mu_{12,3}^* \ar^+V = e_{2,3}^* \ar^+V + \mu_{1,23}^* \ar^+V.
\end{equation}
Now \eqref{eq.assocsuffice} follows 
from the following identity of bundles restricted to $X^{m_1,m_2,m_3}$:
\begin{equation} \label{eq.11easiest}
V^+(m_1,m_2) + V^+(m_1m_2,m_3) = V^+(m_2,m_3) + V^+(m_1,m_2m_3).
\end{equation}
Equation \eqref{eq.11easiest} is a formal consequence of the
definition of the bundles $V^+$. The result with $\ar^+V$ and ${\as}^+V$
replaced by $\ar^-V$ and ${\as}^-V$, respectively, is analogous.
\end{proof}
\subsection{Geometric interpretation of the $\star_{V^+}$ product} \label{subsec.geometric}
The $\star_{V^+}$ has a relatively direct interpretation in terms of an orbifold product on the total space of the vector bundle $V \rTo X$.

\begin{lm}
  Given a $G$-equivariant vector bundle $\pi:V\rTo X$, the inertia
 space $I_GV$ is a vector bundle (of non-constant rank) on $\IGX$
  with structure map $I\pi:I_GV \rTo \IGX$.
\end{lm}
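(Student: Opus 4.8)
The plan is to realize $I_GV$ as the fixed subbundle of a tautological automorphism of a pullback of $V$, and to locate the one nontrivial point in verifying that this fixed locus is a subbundle. First I would make the structure map explicit. The $G$-equivariant projection $\pi\colon V\rTo X$ sends a pair $(g,v)$ with $gv=v$ to $(g,\pi(v))$, and since $g\pi(v)=\pi(gv)=\pi(v)$ this lands in $\IGX$, giving the map $I\pi\colon I_GV\rTo\IGX$. Over a point $(g,x)\in\IGX$ the fibre is $\{v\in V_x : gv=v\}$, the $g$-fixed subspace $V_x^g$ of the fibre $V_x$; this is already a linear subspace, and its dimension will vary with $(g,x)$, accounting for the non-constant rank.

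Next I would build the ambient bundle. Let $\pr\colon\IGX\rTo X$ be the projection $(g,x)\mapsto x$ and form the pullback $\pr^*V$ on $\IGX$, whose fibre over $(g,x)$ is $V_x$. Because $gx=x$, the fibrewise action of $g$ defines a tautological automorphism $\sigma\colon\pr^*V\rTo\pr^*V$ covering the identity of $\IGX$, sending $v\in V_x$ to $gv\in V_{gx}=V_x$. Comparing fibres identifies $I_GV$ with the equalizer of $\sigma$ and $\id$, that is, with the closed subscheme $\ker(\sigma-\id)\subseteq\pr^*V$ via $(g,v)\mapsto(g,\pi(v),v)$.

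The crux is to show that this kernel is a genuine subbundle rather than merely a subsheaf with jumping fibres; in general the kernel of a morphism of vector bundles is a subbundle only where the rank is locally constant, so the finite-stabilizer hypothesis is essential here. By Proposition~\ref{prop.inertiadecomp}, $\IGX$ is a disjoint union of the pieces $I(\Psi)$, and on each $I(\Psi)$ all the group elements $g$ are conjugate to a fixed finite-order element, hence share a common order $N$, so $\sigma^N=\id$ there. The averaging operator $e=\tfrac1N\sum_{k=0}^{N-1}\sigma^k$ is then a bundle endomorphism of $\pr^*V|_{I(\Psi)}$ (a polynomial in $\sigma$), idempotent, with image exactly $\ker(\sigma-\id)$. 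Since the image of an idempotent bundle endomorphism is always a subbundle, $\ker(\sigma-\id)|_{I(\Psi)}$ is a subbundle; equivalently it is the eigenvalue-$1$ summand of the eigenbundle splitting of Definition~\ref{def.logtrace}. As being a vector bundle is checked componentwise on the $I(\Psi)$, this proves that $I\pi$ is a vector bundle, of rank constant on each component but varying globally.

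Finally I would record equivariance and descent. The $G$-action $g\cdot(m,v)=(gmg^{-1},gv)$ on $I_GV$ covers the conjugation action on $\IGX$, and $\sigma$, being the tautological fibrewise action, is $G$-equivariant; hence the averaging projector $e$ and its image are $G$-equivariant, so $I_GV\rTo\IGX$ is a $G$-equivariant vector bundle and descends to a vector bundle on the inertia stack $I\ix=[\IGX/G]$. I expect the only real obstacle to be the subbundle claim of the third paragraph: everything else is formal, and that step is resolved precisely by the semisimplicity furnished by the finite-order hypothesis, which is the same mechanism underlying the logarithmic-trace eigenbundle decomposition used earlier in the paper.
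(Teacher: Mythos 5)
Your proof is correct, and it reaches the conclusion by a slightly different route than the paper. The paper also decomposes $\IGX$ into the pieces $I(\Psi)$ (via Proposition~\ref{prop.inertiadecomp}), but then uses the Morita-type identification $I_X(\Psi)=G\times_{Z_G(g)}X^g$ to reduce everything to a single finite-order element $g$, so that the statement becomes: $V^g\rTo X^g$ is a $Z_G(g)$-equivariant vector bundle whose fibre over $x$ is $(V_x)^g$. The paper essentially stops there, leaving the local triviality of $V^g$ over $X^g$ implicit (it follows from the eigenbundle decomposition of $V|_{X^g}$ under the finite-order automorphism $g$, the same splitting used in Definition~\ref{def.logtrace}). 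You instead stay on all of $I(\Psi)$ at once, realize $I_GV$ as the equalizer of the tautological automorphism $\sigma$ of $\pr^*V$, and produce the averaging idempotent $e=\tfrac1N\sum_k\sigma^k$ whose image is that equalizer. This buys you two things: you never need to invoke Morita equivalence or choose a representative $g$, and you make fully explicit the one step the paper glosses over --- why the fixed locus is a subbundle rather than a subsheaf with jumping rank. The underlying mechanism is the same in both arguments (finite order of the stabilizing elements, hence semisimplicity in characteristic zero), so this is a refinement of presentation rather than a genuinely new idea, but your version is the more self-contained of the two. One small point worth stating explicitly if you write this up: the averaging uses division by $N$, which is harmless here because the paper works over $\nc$, but it is the place where the characteristic-zero hypothesis enters.
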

\begin{proof}
Let $\Psi \subset G$ be a conjugacy class. Denote by $I_X(\Psi) \subset \IGX $
the component of $\IGX $ defined by $\{(g,x) | gx = x, \ g\in\Psi \}$. 
For any morphism $V \rTo X$ and any conjugacy class $\Psi \in G$, if 
$I_X(\Psi) = \emptyset$, then  $I_V(\Psi)$ is also empty.
Thus it suffices to show that $I_V(\Psi)$ is a vector bundle over $I_X(\Psi)$
for every conjugacy class $\Psi \subset G$ with $I_X(\Psi) \neq \emptyset$.
Given $g \in \Psi$ the identification $I_X(\Psi) = G \times_{Z_G(g)} X^g$
reduces the problem to showing that for $g \in G$ the fixed locus
$V^g$ is a $Z_G(g)$-equivariant vector bundle over $X^g$. Since the map $V \rTo X$
is $G$-equivariant, the map $V^g \rTo X$ has image $X^g$. The fiber over
a point $ x\in X^g$ is just $(V_x)^g$, where $V_x$ is the fiber of $V \rTo X$
at $x$.
\end{proof}

Since $I_G V \rTo I_G X$ is a vector bundle, the pullback maps
$$(I\pi)^* \colon K_G(I_GV ) \rTo K_G(I_G X) \quad \text{ and } \quad 
(I\pi)^* \colon A^*_G(I_GV) \rTo A^*_G(I_G X)$$ are isomorphisms.
Both isomorphisms are compatible with the 
ordinary
products on K-theory and equivariant Chow groups.

\begin{thm}\label{prop.star+geo}
For $x,y \in A^*_G(\IGX )$ or $x,y \in K_G(X)$, we have 
\begin{equation} \label{eq.geomv+}
x \star_{V^+} y = (Is)^*\left((I\pi)^* x \star (I\pi)^*y \right),
\end{equation}
where $\star$ is the usual orbifold product on the total space of the $G$-equivariant vector bundle $V \rTo X$ and $Is^*$
is the Gysin map which is inverse to to $I\pi^*$.
\end{thm}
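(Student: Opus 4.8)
The plan is to transport the ordinary orbifold product $\star$ on $I_GV$ to $\IGX$ through the isomorphism $(I\pi)^*$ and to recognize the result as $\star_{V^+}$. Write $p = I\pi\colon I_GV \rTo \IGX$ and $s = Is$ for the zero section, so that $(Is)^* = (p^*)^{-1}$; the analogous projection on double inertia I denote $p_2\colon \II_GV \rTo \II_GX$, with zero section $s_2$. Two geometric inputs drive the argument. First, the tangent bundle of the total space splits as $\T_V = \pi^*\T + \pi^*V$ in $K_G(V)$ (horizontal plus vertical directions, with the same $\g$ subtracted), so by additivity of the logarithmic restriction $\Lr{\T_V} = \Lr{\pi^*\T} + \Lr{\pi^*V}$. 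Second, the evaluation maps $e_1,e_2$ and the multiplication $\mu$ for $I_GV$ lie over those for $\IGX$ via $p$ and $p_2$; the two evaluation squares commute strictly (so $e_i^* p^* = p_2^* e_i^*$, with $e_i$ the evaluation map on the appropriate inertia), while the multiplication square is only \emph{almost} Cartesian: over the component indexed by $(m_1,m_2)$ its fibres are the inclusion $V^{m_1,m_2}\subset V^{m_1m_2}$, with quotient (excess) bundle $N$ whose class restricted to $X^{m_1,m_2}$ is $V^{m_1m_2}-V^{m_1,m_2}$.

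With this notation I would expand the right-hand side of \eqref{eq.geomv+} as $(I\pi)^*x \star (I\pi)^*y = \mu_*\bigl(e_1^* p^* x \cdot e_2^* p^* y \cdot \euler(\Lr{\T_V})\bigr)$ and push the pullbacks through the evaluation squares, replacing $e_i^* p^*$ by $p_2^* e_i^*$. The crux is computing $s^*\mu_*$. I factor $\mu\colon \II_GV \rTo I_GV$ as $\tilde\mu\circ q$, where $\tilde\mu\colon W \rTo I_GV$ is the projection from the fibre product $W = I_GV \times_{\IGX}\II_GX$ (a genuinely Cartesian square over $p$ and $\mu\colon \II_GX \rTo \IGX$) and $q\colon \II_GV \hookrightarrow W$ is the fibrewise inclusion of the subbundle $V^{m_1,m_2}\subset V^{m_1m_2}$, a regular embedding with normal bundle $N$. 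Flat base change along the vector bundle projection $p$ gives $s^*\tilde\mu_* = \mu_*\sigma^*$ for the zero section $\sigma$ of $W$; since $\sigma = q\circ s_2$, the self-intersection formula yields $s^*\mu_*(\beta) = \mu_* s_2^*\bigl(\beta \cdot \euler(N)\bigr)$. Feeding in $\beta = p_2^* e_1^* x \cdot p_2^* e_2^* y \cdot \euler(\Lr{\T_V})$ and using that $s_2^* = (p_2^*)^{-1}$ is a ring isomorphism collapses everything to
\[
s^*\mu_*\beta = \mu_*\Bigl(e_1^*x \cdot e_2^*y \cdot s_2^*\bigl(\euler(\Lr{\T_V})\cdot\euler(N)\bigr)\Bigr).
\]

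It then remains to verify the single identity $s_2^*\bigl(\Lr{\T_V}+N\bigr) = \Lr{\T} + \ar^+V = \crr^+V$ in $K_G(\II_GX)$, after which multiplicativity of the Euler class finishes the proof and the left-hand side of \eqref{eq.geomv+} is recovered as $s^*\bigl(p^*x \star p^*y\bigr)$. Using $\Lr{\T_V} = \Lr{\pi^*\T} + \Lr{\pi^*V}$ together with $s_2^*\Lr{\pi^*\T} = \Lr{\T}$ and $s_2^*\Lr{\pi^*V} = \Lr{V}$ (the zero-section restriction undoes the $\pi$-pullback and commutes with the logarithmic restriction), this reduces to $\ar^+V = \Lr{V} + (V^{m_1m_2}-V^{m_1,m_2})$. \textbf{This last identification is the main obstacle}: it is exactly the excess term of the non-Cartesian multiplication square reappearing as the difference between the $+$-construction and the naive logarithmic restriction. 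I expect to obtain it from the relation $L(g)(V) + L(g^{-1})(V) = V - V^g$ precisely as in the proof of Proposition~\ref{prop.integral+-}, where the same rewriting produced $V^+(g_1,g_2) = L(g_1,g_2,g_3)(V) + V^{g_1g_2} - V^{g_1,g_2}$. The remaining work is bookkeeping: checking that the evaluation squares commute, that $p_2$ is flat so base change applies, and that both Euler classes (the K-theoretic $\lambda_{-1}$ and the Chow $\ctop$) are multiplicative and commute with the Gysin restriction $s_2^*$, so that the argument runs identically on $A^*_G(\IGX)$ and $K_G(\IGX)$.
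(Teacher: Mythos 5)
Your proposal is correct and follows essentially the same route as the paper's proof: identify the tangent bundle of $[V/G]$ as $\T+V$, factor the multiplication map on $I_GV$ through the Cartesian square $\mu^*I_V(\Psi)$ so that the excess/normal bundle $V^{g_1g_2}-V^{g_1,g_2}$ appears, and close with the identity $\Lr{V} + V^{g_1g_2} - V^{g_1,g_2} = \ar^+V$ coming from $L(g)(V)+L(g^{-1})(V)=V-V^g$. Your base-change and self-intersection bookkeeping is spelled out more explicitly than in the paper, but it is the same argument.
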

\begin{proof}
  We give the proof only in equivariant Chow theory---the proof in equivariant
  K-theory is essentially identical.  We compare the two sides of
  \eqref{eq.geomv+}. If $\Psi_1, \Psi_2, \Psi_3 \subset G$ are
  conjugacy classes and $x \in A^*_G(I_X(\Psi_1)), y \in
  A^*_G(I_X(\Psi_2))$, then the contribution of $x \star_{V^+} y$
to $A^*_G(I_X(\Psi_3))$ is 
\begin{equation} \label{eq.lhsv+}
\sum_{\Phi_{1,2}} \mu_*\left(e_1^*x \cdot 
e_2^*y
\cdot \euler(\Lr{\T} + \ar^+V)
\right),
\end{equation}
where the sum is over all conjugacy classes $\Phi_{1,2} \subset G \times G$ 
satisfying
\begin{equation*} 
e_1(\Phi_{1,2}) = \Psi_1, e_2(\Phi_{1,2}) = \Psi_2, 
\mu(\Phi_{1,2}) = \Psi_3.
\end{equation*}
Since the class of tangent bundle of $V$ equals $TX + V$, the tangent bundle to the stack $[V/G]$ is $TX +V - {\mathfrak g} = \T + V$.
Thus, the contribution of the right-hand side of \eqref{eq.geomv+} is the sum
\begin{equation}\label{eq.rhsv+}
\sum_{\Phi_{1,2}} Is^*
\left(\mu_{V*}(I\pi)^*
(e_1^*x \cdot 
e_2^*y 
\cdot \euler(\Lr{\T} +\Lr{V}))\right),
\end{equation}
where the map $\mu_V$ in \eqref{eq.rhsv+} is understood to be the
multiplication map 
$\II_G V \rTo I_G V$.  
If $\Phi$ is a conjugacy
class in $G \times G$ with $\mu(\Phi) = \Psi$, then the multiplication
map $\mu_V \colon I_V(\Phi) \rTo I_V(\Psi)$ factors through the inclusion
\begin{equation} \label{eq.factor}
I_V(\Phi) \rInto\mu^*I_V(\Psi)  \rTo^{I\pi^*\mu} I_V(\Psi),
\end{equation}
and we have the following diagram, with a Cartesian square on the right.
$$\begin{diagram}
I_V(\Phi)  & \rInto & \mu^*I_V(\Psi) & \rTo^{I\pi^*\mu} & I_V(\Psi)\\
& \rdTo^{I\pi} &\dTo & & \dTo^{I\pi}\\
& & I(\Phi) & \rTo^{\mu} & I(\Psi)
\end{diagram}
$$
The normal bundle to the inclusion $I_V(\Phi)
\rInto \mu^*(I_V(\Psi))$ is 
the pull-back of 
the bundle $V_\Psi/V_\Phi$ on $\II_X(\Phi)$, where $V_\Phi \subset
\res{V}{I_X(\Psi)}$ is the subbundle whose fiber over a point $(g,x)$ is
the subspace $V^{g}$, and the fiber of $V_\Phi$ over a point $(g_1,g_2,x)$
is the subspace $V^{g_1,g_2} \subset V$. Using this information about the normal bundle we can rewrite
\eqref{eq.rhsv+} as 
\begin{equation} \label{eq.rhsv+massage}
\mu_*\left(e_1^*x \cdot e_2^*y 
 \cdot
 \euler(\Lr{\T+V} + V_\Psi - V_\Phi\right)
).
\end{equation}
Finally, expression \eqref{eq.rhsv+massage} can be identified with \eqref{eq.lhsv+} by observing that if $g_1,g_2 \in G$, then
\begin{eqnarray*} L(g_1)(V) + L(g_2)(V) + L((g_1g_2)^{-1})(V) + V - V^{g_1,g_2}
 +V^{g_1,g_2} - V^{g_1g_2} & =& \\
L(g_1)(V) + L(g_2)(V) -L(g_1g_2)(V). &  &
\end{eqnarray*}
\end{proof}
\subsection{Geometric interpretation of the $\star_{V^-}$ product}\label{subsect:geomneg}
\begin{comment}
Consider the automorphism of $\iota \colon \IGX  \rTo \IGX $ given
by $(g,x) \mapsto (g^{-1},x)$. The orbifold product $\star$ on $A^*_G(\IGX )$
and $K_G(\IGX )$ does not commute with respect to pullback via this automorphism. Hence we can define another orbifold product on $A^*_G(X)$ and $K_G(\IGX )$
as follows:
\begin{df}
The twisted orbifold product $\star'$ is defined on $A^*_G(\IGX )$ and $K_G(\IGX )$
by the formula
\begin{equation*}
x \star' y = (\iota^*)^{-1}(\iota^*x \star \iota^* y),
\end{equation*}
where again $\star$ is the usual orbifold product.
\end{df}
The same method used to prove Proposition \ref{prop.star+geo} yields
the following geometric interpretation of the $\star_{V^{-}}$ product.
\begin{prop} \label{prop.star-geo}
For $x,y \in A^*_G(\IGX )$ or $x,y \in K_G(X)$ 
\begin{equation*}
x \star_{V^-} y = ((I\pi)^*)^{-1}\left((I\pi)^* x \star' (I\pi)^*y \right),
\end{equation*}
where $\star'$ is the twisted orbifold product on the total space of the $G$-equivariant vector bundle $V \rTo X$.
\end{prop}
\end{comment}
The $\star_{V^-}$ product does not generally correspond to an
orbifold product on a bundle. However, we will show that, after tensoring with $\nc$,
the inertial Chow (or cohomology) ring with the $\star_{V^-}$ product is 
isomorphic to the inertial Chow (or cohomology) ring coming
from the total space of the dual bundle.
The latter is isomorphic to the orbifold Chow (or cohomology) ring of the total space of the dual bundle.
\begin{df}
Given a vector bundle $V$ on a quotient stack $\ix = [X/G]$, we
define an automorphism $\Theta_V$ of $A^*(I\ix) \otimes \nc$ as follows. If $x_\Psi$
is supported on a component $I(\Psi)$ of $I\ix$ corresponding to a conjugacy class $\Psi \subset G$ then we set $\Theta_V(x_\Psi) = e^{i \pi a_\Psi} x_\Psi$,
where $a_\Psi$ is the virtual rank of the logarithmic trace 
$L(g^{-1})(V)$
for any
representative element $g \in \Psi$.
The same formula defines an automorphism of $H^*(I\ix, \nc)$. 
\end{df}

\begin{thm}\label{thm.star-geo}
For $x,y \in A^*_G(\IGX )$ we have
\begin{equation} 
\label{eq.geomv-}
\begin{array}{rcl}
x \star_{V^-} y & = &\pm (Is)^*\left((I\pi)^* x \star (I\pi)^*y \right)\\
&= & \pm x \star_{V^{*+}}y,
\end{array}
\end{equation}
where $\star$ is the usual orbifold product on the total space of the
$G$-equivariant vector bundle $V^* \rTo X$, and $Is^*$ is the Gysin map
which is inverse to to $I\pi^*$,
and the sign $\pm$ is $(-1)^{a_{\Psi_1}+a_{\Psi_2}-a_{\Psi_{12}}}$ 
where ${a_{\Psi_1}+a_{\Psi_2}-a_{\Psi_{12}}}$ is a non-negative integer.
Moreover, if we tensor with ${\nc}$, then
we have the identity
\begin{equation} \label{eq.star-dual}
\Theta_V(x \star_{V^-} y)  = \Theta_V(x) \star_{V^{*+}} \Theta_V( y).
\end{equation}
\end{thm}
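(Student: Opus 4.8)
The plan is to prove the two assertions of Theorem~\ref{thm.star-geo} in sequence, reducing the first to the already-established Theorem~\ref{prop.star+geo} and then using the resulting sign to force the correction automorphism $\Theta_V$ in the second.

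First I would establish the identity $\cs^-V = \cs^+V^*$, or more precisely compare the relevant logarithmic traces. The key elementary fact is that for a unitary automorphism $g$ of a bundle $E$, the eigenvalues of $g^{-1}$ on $E$ are the inverses of the eigenvalues of $g$, so if $g$ acts with eigenvalue $\exp(2\pi\sqrt{-1}\,\alpha_k)$ on an eigenbundle of $E$ (with $0\le\alpha_k<1$), then $g$ acts with eigenvalue $\exp(2\pi\sqrt{-1}(1-\alpha_k))$ on the same bundle viewed inside $E^*$ via duality (after reindexing the zero eigenvalue carefully). Concretely, $L(g^{-1})(V)$ and $L(g)(V^*)$ are closely related: on the part where $g$ acts nontrivially one has the pointwise relation $L(g^{-1})(V) = V - V^g - L(g)(V)$ on fibers, and a parallel relation expresses $L(g)(V^*)$. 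The upshot I want is that $\ar^-V$ and $\ar^+V^*$ have equal Euler \emph{classes up to the sign} dictated by the difference of their ranks. Since $\euler$ of a virtual bundle differs by $(-1)^{\text{rank}}$ when one replaces a class by one of the same total $K$-class but different rank placement, the discrepancy between $\star_{V^-}$ and $\star_{V^{*+}}$ is exactly the factor $(-1)^{a_{\Psi_1}+a_{\Psi_2}-a_{\Psi_{12}}}$ recorded in the statement. Here $a_\Psi$ is the virtual rank of $L(g^{-1})(V)$, and the integrality and non-negativity of $a_{\Psi_1}+a_{\Psi_2}-a_{\Psi_{12}}$ is guaranteed by Proposition~\ref{prop.integral+-}, which says $V^-(g_1,g_2)$ is a non-negative integral element. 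This proves the first display \eqref{eq.geomv-}, with the second line $\pm x\star_{V^{*+}}y$ following immediately from Theorem~\ref{prop.star+geo} applied to $V^*$.

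Next, for the identity \eqref{eq.star-dual}, I would unwind the definition of $\Theta_V$ on homogeneous components. If $x$ is supported on $I(\Psi_1)$ and $y$ on $I(\Psi_2)$, then the $\star_{V^-}$ (equivalently $\star_{V^{*+}}$) product lands in components $I(\Psi_{12})$ with $\mu(\Phi_{1,2})=\Psi_{12}$. Applying $\Theta_V$ to the left side of \eqref{eq.star-dual} multiplies by $e^{i\pi a_{\Psi_{12}}}$, while applying $\Theta_V$ to each factor on the right contributes $e^{i\pi a_{\Psi_1}}\cdot e^{i\pi a_{\Psi_2}}$. The ratio of these two scalar factors is $e^{i\pi(a_{\Psi_1}+a_{\Psi_2}-a_{\Psi_{12}})} = (-1)^{a_{\Psi_1}+a_{\Psi_2}-a_{\Psi_{12}}}$, using that the exponent is an integer. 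This is precisely the sign $\pm$ appearing in \eqref{eq.geomv-}, so $\Theta_V$ absorbs it exactly, and \eqref{eq.star-dual} follows by rearranging.

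The \textbf{main obstacle} I anticipate is the bookkeeping in the first step: verifying that $\ar^-V$ and $\ar^+V^*$ genuinely have the same Euler class up to the asserted sign, rather than some other discrepancy. The subtlety is that $L(g^{-1})(V)$ and $L(g)(V^*)$ are not literally equal as $K$-theory classes (they differ on the fixed part $V^g$ and by the convention $0\le\alpha<1$ that breaks the naive symmetry $\alpha\mapsto 1-\alpha$ at $\alpha=0$), so I must track which eigenbundles contribute to the rank and confirm the rank difference is exactly $a_{\Psi_1}+a_{\Psi_2}-a_{\Psi_{12}}$ on each component $I(\Psi_{12})$. Once the relation between the two virtual bundles is pinned down on fibers over $X^{g_1,g_2}$ and shown to descend correctly under Morita equivalence, the rest is formal manipulation of evaluation and pushforward maps identical to the proof of Theorem~\ref{prop.star+geo}, together with the scalar computation above.
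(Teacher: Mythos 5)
Your overall strategy coincides with the paper's: reduce \eqref{eq.geomv-} to Theorem~\ref{prop.star+geo} applied to $V^*$, identify the sign, and then check that $\Theta_V$ absorbs exactly that sign. Your treatment of \eqref{eq.star-dual} is essentially identical to the paper's and is correct: the scalar ratio $e^{i\pi(a_{\Psi_1}+a_{\Psi_2}-a_{\Psi_{12}})}=(-1)^{a_{\Psi_1}+a_{\Psi_2}-a_{\Psi_{12}}}$ cancels the sign in \eqref{eq.geomv-}, with integrality supplied by Proposition~\ref{prop.integral+-}.

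However, the first step as you describe it has a genuine gap in the mechanism producing the sign. You attribute the discrepancy between $\euler(\ar^-V)$ and $\euler(\ar^+V^*)$ to ``the difference of their ranks'' and to replacing ``a class by one of the same total $K$-class but different rank placement.'' Neither description is correct: $\ar^-V$ and $\ar^+V^*$ have \emph{equal} rank and are \emph{not} equal in $K$-theory --- they are dual bundles. The clean statement you need is that for $g$ acting on $V$ with eigenvalue $e^{2\pi i\alpha_k}$ on the eigenbundle $V_k$ ($0<\alpha_k<1$), the bundle $V_k^*$ is the $e^{2\pi i(1-\alpha_k)}$-eigenbundle of $g$ in $V^*$, whence $L(g^{-1})(V)=L(g)(V^*)^*$ and therefore $V^-(g_1,g_2)=\bigl((V^*)^+(g_1,g_2)\bigr)^*$ as honest $Z_G(g_1,g_2)$-equivariant bundles (honest and of non-negative integral rank by Proposition~\ref{prop.integral+-}). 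The sign then comes from the standard identity $\ctop(E^*)=(-1)^{\rk E}\,\ctop(E)$ applied to $E=\ar^+V^*$, giving $\euler(\ar^-V)=(-1)^{\rk \ar^-V}\euler(\ar^+V^*)$ with $\rk V^-(g_1,g_2)=a_{\Psi_1}+a_{\Psi_2}-a_{\Psi_{12}}$. Your alternative identity $L(g^{-1})(V)=V-V^g-L(g)(V)$ is true but does not by itself relate $\ar^-V$ to $\ar^+V^*$; you would still need the duality of eigenbundles to close the argument. Note also that this duality mechanism explains why the theorem is stated only for Chow groups: in $K$-theory $\lambda_{-1}(E)$ and $\lambda_{-1}(E^*)$ do not differ by a mere sign, so the ``rank bookkeeping'' picture would mislead you there.
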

\begin{rem}\label{rem:disc-tors}
The $\pm$ sign appearing in the previous theorem is an example of discrete torsion. Similar signs appear in the work of \cite{FaGo:03}.
\end{rem}
\begin{proof}
Observe that if $g \in G$ acts on a representation $V$ of 
$Z= Z_G(g)$ 
with weights
$e^{i \theta_1}, \ldots , e^{i \theta_r}$, then $g$ naturally acts on $V^*$
with weights 
$e^{-i \theta_1} , \ldots , e^{-i \theta_r}$, and the $e^{i \theta_k}$ eigenspace
of $V$ is dual to the $e^{-i \theta_k}$ eigenspace of $V^*$. Hence
$L(g^{-1})(V) = L(g)(V^*)^*$ as elements of $K(X^g) \otimes \Q$. 
Thus given a pair $g_1, g_2 \in G$,
we see that 
$V^-(g_1,g_2) = \left((V^*)^+(g_1,g_2)\right)^*$ as 
a $Z_G(g_1,g_2)$-equivariant 
bundles
on $X^{g_1,g_2} = X^{g_1^{-1},g_2^{-1}}$.
Hence, $\euler(\ar^-V) = (-1)^{\rk \; \ar^-V}\euler(\ar^+V^*)$, 
so \eqref{eq.geomv-} holds.
If $x$ is supported in the component $I(\Psi_1)$ and
$y$ is supported in the component $I(\Psi_2)$, then
$x \star_{V^-} y$ is supported at components $I(\Psi_{12})$, where
$\Psi_{12}$ is a conjugacy class of $g_1 g_2$ for some $g_1 \in \Psi_1$
and $g_2 \in \Psi_2$. 

Now we have $$\Theta_V(x \star_{V^-} y) = \sum_{\Psi_{12}} e^{i \pi{a_{\Psi_{12}}}} 
(-1)^{\rk \;V^{-}(g_1,g_2)}
x \star_{V^{*+}} y,$$ 
while
$$\Theta_V(x) \star_{V^{*+}} \Theta_V(y) = \sum_{\Psi_{12}}e^{i \pi (a_{\Psi_1} +a_{\Psi_2})} x \star_{V^{*+}} y.$$
Thus Equation~\eqref{eq.star-dual} follows from the fact that
$\rk V^{-}(g_1,g_2) = a_{\Psi_1} + a_{\Psi_2} - a_{\Psi_{12}}$.
\end{proof}

\subsection{The virtual orbifold product is the $\star_{TX^-}$ product}\label{sec:virprod}
The virtual orbifold product 
was introduced in \cite{GLSUX:07}. In our context it 
 (or more precisely its algebraic analogue) 
can be defined as follows:
\begin{df}
Let $\T^{virt}$ be the class in $K_G(\II_G X)$ defined by the formula
\begin{equation}\label{eq:TvirtDef}
\T|_{\II_G X} + \T_{\II_G X} - e_1^*\T_{\IGX } - e_2^*\T_{\IGX },
\end{equation}
where $\res{\T}{\II_G X}$ refers to the pullback of the class $\T$ to $\II_G X$
via any of the three natural maps $\II_G X \rTo X$, where $\T_{\IGX }$ denotes the
tangent bundle to the stack 
$I\ix= [\IGX /G]$, 
and where $\T_{\II_G X}$ denotes the tangent bundle to the stack $I^2{\stack X}$. 
\end{df}

\begin{prop}
  The identity $\T^{virt} = \Lr{\T} + {\ar^-\T}$ holds in $K_G(\II_G X)$.
  In particular, $\T^{virt}$ is represented by a non-negative
  element of $K_G(\II_G X)$ and the $\star_{\euler{\T^{virt}}}$-product is commutative and associative. Moreover, $\cs\T + \as^-\T = \N$,  
 where $\N$ is the normal bundle of the canonical morphism $\IGX \rTo X$,
so $(\T^{virt},\N)$ is a strongly Gorenstein inertial pair.
\end{prop}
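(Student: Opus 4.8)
The plan is to prove the three assertions of the proposition in sequence, leveraging the previously established definitions and the key identity $L(g)(W) + L(g^{-1})(W) = W - W^g$. First I would establish the central identity $\T^{virt} = \Lr{\T} + \ar^-\T$ in $K_G(\II_G X)$ by comparing the two classes restricted to each component $\II(\Phi)$ for a diagonal conjugacy class $\Phi$ represented by $(g_1,g_2)$ with $g_3 = (g_1g_2)^{-1}$. On such a component, $\Lr{\T}$ restricts to the class $\T(g_1,g_2,g_3) = L(g_1)(\T) + L(g_2)(\T) + L(g_3)(\T) - \T + \T^{g_1,g_2}$ (all restricted to $X^{g_1,g_2}$), while $\ar^-\T$ restricts to $\T^-(g_1,g_2) = L(g_1^{-1})(\T) + L(g_2^{-1})(\T) - L(g_2^{-1}g_1^{-1})(\T)$. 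I would then unwind the definition of $\T^{virt}$ in \eqref{eq:TvirtDef}, identifying $\T_{\II_G X}$ and $\T_{\IGX}$ with the fixed-subbundles $\T^{g_1,g_2}$ and the $e_i^*\T^{g_i}$ respectively (since the tangent bundle to the inertia stack along a component is the fixed part of $\T$). Applying the identity $L(g_i)(\T) + L(g_i^{-1})(\T) = \T - \T^{g_i}$ three times to convert the positive log-traces in $\Lr{\T}$ into negative ones should collapse the sum $\Lr{\T} + \ar^-\T$ exactly to the four-term expression $\T + \T^{g_1,g_2} - \T^{g_1} - \T^{g_2}$, matching $\T^{virt}$.

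Once the identity is in hand, the non-negativity of $\T^{virt}$ and the commutativity and associativity of the $\star_{\euler(\T^{virt})}$-product follow immediately: by Proposition~\ref{prop.integral+-} the class $\ar^-\T$ is non-negative and integral, $\Lr{\T}$ is non-negative as the obstruction bundle of the orbifold product, and by the main theorem of the previous subsection $(\crr^-\T, \cs^-\T) = (\Lr{\T} + \ar^-\T, \cs\T + \as^-\T)$ is an inertial pair, so its associated bundle is strongly associative and hence the product is commutative and associative. I would simply invoke these results rather than reprove them.

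The remaining task is to identify $\cs\T + \as^-\T$ with the normal bundle $\N$ of the inertia morphism $\IGX \rTo X$ and to verify the strongly Gorenstein condition. Here I would work on a component $I(\Psi)$ with representative $g$: by definition $\cs\T = L(\T)$ restricts to $L(g)(\T)$ and $\as^-\T$ restricts to $L(g^{-1})(\T)$, both on $X^g$. Using $L(g)(\T) + L(g^{-1})(\T) = \T - \T^g$ one more time, the sum becomes $\T|_{X^g} - \T^g$, which is precisely the normal bundle $\N$ of the embedding of the fixed locus, represented by a genuine $Z_G(g)$-equivariant vector bundle. Since $\N$ is an honest vector bundle, the class $\cs\T + \as^-\T$ has integral virtual rank and is represented by a bundle, so the pair $(\T^{virt}, \N)$ is strongly Gorenstein by definition.

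The main obstacle I anticipate is the first step: correctly matching the somewhat abstract definition of $\T^{virt}$ in \eqref{eq:TvirtDef} — particularly the precise meaning of the tangent-bundle-to-the-inertia-stack terms $\T_{\IGX}$ and $\T_{\II_G X}$ and the pullbacks $e_i^*\T_{\IGX}$ — with the fixed-subbundle expressions that arise naturally from the log-trace formalism. The identification of the tangent bundle to the inertia stack with the fixed part $\T^g$ of the tangent bundle along each twisted sector requires care, and keeping track of which restriction of $\T$ (via which of the three maps $\II_G X \rTo X$) appears in each term is where sign or bookkeeping errors are most likely to intrude. Everything after that reduces to repeated mechanical application of the single relation $L(g)(\T) + L(g^{-1})(\T) = \T - \T^g$, which is routine.
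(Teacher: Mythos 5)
Your proposal is correct and follows essentially the same route as the paper, whose entire proof is the single identity $L(g)(\T) + L(g^{-1})(\T) = \T|_{X^g} - \T_{X^g} = \N|_{X^g}$; you simply spell out the component-by-component bookkeeping (including the cancellation of the $L(g_3)(\T)$ term against $-L(g_2^{-1}g_1^{-1})(\T)$, since $g_3=(g_1g_2)^{-1}$) that the paper leaves implicit. The only tiny imprecision is that the identity is applied twice, not three times, in the first step.
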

\begin{proof}
The proof follows from the identity $L(g)(\T) + L(g^{-1})(\T) = 
\T|_{X^g}
- \T_{X^g}
=\res{\N}{X^g}$.
\end{proof}
\begin{df}
  Following \cite{GLSUX:07}, we define the \emph{virtual orbifold product} to
  be the $\star_{\euler(\T^{virt})}$-product.
\end{df}
\begin{crl}
The virtual product 
$\star_{virt}$
on $A^*_G(I_GX)$ agrees up to sign with 
the $\star_{\T^{* +}}$ inertial 
product on 
$A_G^*(I_GX)$ 
induced by the cotangent bundle $\T^*$ of $\ix = [X/G]$, 
and there is an isomorphism of rings $(
A^*_G(I_GX)_\nc,
          \star_{virt})  \cong (
          A^*_G(I_GX) _\nc,
          \star_{\T^{*+}})$.
\end{crl}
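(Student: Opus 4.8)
The plan is to obtain the corollary as the specialization of Theorem~\ref{thm.star-geo} to $V = \T$, bridging through the preceding proposition. First I would invoke that proposition, which records the identity $\T^{virt} = \Lr{\T} + \ar^-\T = \crr^-\T$ in $K_G(\II_G X)$. By definition the virtual product $\star_{virt}$ is the $\star_{\euler(\T^{virt})}$-product, so this identity says precisely that $\star_{virt}$ coincides with the $\star_{\T^-}$ product. The entire corollary then becomes a statement about the $\star_{V^-}$ product for the single choice $V = \T$.

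Next I would read off the ``up to sign'' claim from equation~\eqref{eq.geomv-} of Theorem~\ref{thm.star-geo} with $V = \T$: it gives $x \star_{\T^-} y = \pm\, x \star_{\T^{*+}} y$, with the sign $(-1)^{a_{\Psi_1}+a_{\Psi_2}-a_{\Psi_{12}}}$ determined componentwise by the virtual ranks of the logarithmic traces. Since the cotangent bundle of $\ix = [X/G]$ is $\T^*$, this is exactly the assertion that $\star_{virt}$ agrees up to sign with the $\star_{\T^{*+}}$ product.

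For the ring isomorphism I would use the automorphism $\Theta_\T$ of $A^*_G(\IGX) \otimes \nc$ furnished by the same theorem. Equation~\eqref{eq.star-dual} reads $\Theta_\T(x \star_{\T^-} y) = \Theta_\T(x) \star_{\T^{*+}} \Theta_\T(y)$, which says that $\Theta_\T$ intertwines the $\star_{virt}$ product with the $\star_{\T^{*+}}$ product. As $\Theta_\T$ is an invertible $\nc$-linear map (multiplication by a root of unity on each inertial component), it is the required ring isomorphism $(A^*_G(I_GX)_\nc, \star_{virt}) \cong (A^*_G(I_GX)_\nc, \star_{\T^{*+}})$.

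The one point requiring care, and the place I expect the only real friction, is that $\T = TX - \g$ is a virtual class rather than an honest vector bundle, while Theorem~\ref{thm.star-geo} is stated for a genuine $G$-equivariant bundle $V$. I would resolve this by observing that the classes $\ar^\pm$, $\as^\pm$ and the automorphism $\Theta_V$ are all assembled additively from the logarithmic traces $L(g^{\pm 1})$, so every identity used in the proof of Theorem~\ref{thm.star-geo}---notably $L(g^{-1})(V) = L(g)(V^*)^*$ and the resulting $V^-(g_1,g_2) = ((V^*)^+(g_1,g_2))^*$---propagates by linearity to the K-theory class $\T$, with $\T^*$ taking over the role of $V^*$. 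Once this extension is granted, the corollary is a direct quotation of the two preceding statements.
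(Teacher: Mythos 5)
Your proposal is correct and follows exactly the route the paper intends: the corollary is stated without separate proof precisely because it is the combination of the preceding proposition (identifying $\T^{virt}$ with $\crr^-\T$, hence $\star_{virt}$ with $\star_{\T^-}$) and Theorem~\ref{thm.star-geo} applied with $V=\T$. Your side remark about $\T$ being a priori a virtual class is handled appropriately (and is harmless here, since for a quasi-free action $\T = TX - \g$ is in fact represented by an honest equivariant bundle, the tangent bundle of the stack).
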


\begin{rem}
While in Chow these products differ by a simple discrete torsion (see Remark~\ref{rem:disc-tors}), in K-theory the virtual product is not so easily identified with the product $\star_{\T^{* +}}$, as can be seen from the fact that the gradings do not match (discrete torsion does not change the grading).  If there is a connection in K-theory, it will have to be via something much more general, like a K-theoretic version of the matrix discrete torsion of \cite{Kau:10a}.
\end{rem}

\subsection{An example with $\Pro(1,3,3)$} \label{sec.pro13} We
illustrate the various inertial products in $K$-theory and Chow
theory with the example of the weighted projective space $\ix =
\Pro(1,3,3) = [(\A^3\smallsetminus \{0\})/{\mathbb C}^*]$ where
${\nc}^*$ acts with weights $(1,3,3)$.  The inertia $I\ix$ has
three sectors---the identity sector $\ix^1 = \ix$ and two twisted
sectors $\ix^{\omega}$ and $\ix^{\omega^{-1}}$, where $\omega = e^{2\pi
  i/3}$. Both twisted sectors are isomorphic to 
a  $\B{\mathbb \mu}_3$-gerbe
over $\Pro^1$.
The $K$-theory of each sector is a quotient of the
representation ring $R({\mathbb C}^*)$. Precisely, we have $K(\ix^1) =
\Z[\chi]/\left< (\chi-1)(\chi^3 -1)^2\right>$, and $K(\ix^\omega) =
K(\ix^{\omega^{-1}})= \Z[\chi]/\left<(\chi^3-1)^2\right>$, 
where $\chi$ is the
defining character of ${\mathbb C}^*$. The projection formula in
equivariant $K$-theory implies that any inertial product is determined
by the products $\one_{g_1} \star \one_{g_2} \in K(\ix^{g_1g_2})$, 
where $\one_g$ is the $K$-theoretic fundamental class on the
sector $\ix^g$.

The usual orbifold product is represented by the following 
symmetric
matrix:

\begin{center}
\begin{tabular}{l|ccc}
 & $\ix^1$ & $\ix^\omega$ & $\ix^{\omega^{-1}}$\\
\hline
$\ix^1$  & 1 & 1 & 1\\
$\ix^{\omega}$  & & 1 & $\euler(\chi)$\\
$\ix^{\omega^{-1}}$  & & & $\euler(\chi)$
\end{tabular}
\end{center}

The virtual and orbifold cotangent products are represented by the following matrices:

\begin{center}
\begin{tabular}{r|ccc}
 & $\ix^1$ & $\ix^\omega$ & $\ix^{\omega^{-1}}$\\
 \hline
$\ix^1$  & 1 & 1 & 1\\
$\ix^{\omega}$  & & $\euler(\chi)$ & $\euler(\chi)^2$\\
$\ix^{\omega^{-1}}$  & & & $\euler(\chi)$
\end{tabular}
\qquad
\begin{tabular}{r|ccc}
& $\ix^1$ & $\ix^\omega$ & $\ix^{\omega^{-1}}$\\
\hline
$\ix^1$ & 1 & 1 & 1\\
$\ix^{\omega}$ & & $\euler(\chi^{-1})$ & $\euler(\chi)\euler(\chi^{-1})$\\
$\ix^{\omega^{-1}}$ & & & $\euler(\chi)$
\end{tabular}
\end{center}

If we denote by $t= c_1(\ix) \in A^*(\B{\mathbb C^*})$, then the inertial
products on Chow and cohomology groups can also be represented by matrices
as above. After tensoring with ${\mathbb C}$,
the Chow groups of the sectors are $A^*(\ix^1)_\nc = {\mathbb C}[t]/\left< t^3\right>$
and $A^*(\ix^\omega)_\nc = A^*(\ix^{\omega^{-1}})_\nc = {\mathbb C}[t]/\left< t^2\right>$.
The corresponding matrices for the virtual and cotangent orbifold products are the following:

\begin{center}
\begin{tabular}{r|ccc}
 & $\ix^1$ & $\ix^\omega$ & $\ix^{\omega^{-1}}$\\
\hline
$\ix^1$ & 1 & 1 & 1\\
$\ix^{\omega}$ & & t & $t^2$\\
$\ix^{\omega^{-1}}$ & & & $t$
\end{tabular}
\qquad
\begin{tabular}{r|ccc}
 & $\ix^1$ & $\ix^\omega$ & $\ix^{\omega^{-1}}$\\
\hline
$\ix^1$ & 1 & 1 & 1\\
$\ix^{\omega}$  & & $-t$ & $-t^2$\\
$\ix^{\omega^{-1}}$  & & & $t$
\end{tabular}
\end{center}

The automorphism of 
$A^*(I\ix)_\nc$ 
which is the identity
on 
$A(\ix^1)_\nc$ 
and which acts by multiplication by $e^{2\pi i/3}$
on 
$A^*(\ix^\omega)_\nc$
and $e^{\pi i /3}$ on 
$A^*(\ix^{\omega^{-1}})_\nc$ 
defines a ring isomorphism between these products.

\section{The  localized orbifold  product on 
$K(I\ix) \otimes {\nc}$
}
If an algebraic group $G$ acts with finite stabilizer on smooth
variety $Y$, then there is a decomposition of $K_G(Y)\otimes {\mathbb C}$ as a sum of
localizations $\bigoplus_{\Psi} K_G(Y)_{\m_{\Psi}}$.  Here the sum is over
conjugacy classes $\Psi \subset G$ such that $I(\Psi) \neq \emptyset$,
and $\m_\Psi \in \Spec R(G)$ is the maximal ideal of class functions
vanishing on the conjugacy class $\Psi$. 

Given a conjugacy class 
$\Psi \subset G$ 
and a choice of $h\in\Psi$, denote the centralizer of $h$ in $G$ by $Z = Z_G(h)$.  The conjugacy class of $h$ in $Z$ is just $h$ alone, and there is a corresponding maximal ideal $\m_h \in \spec R(Z)$.  As described in \cite[\S4.3]{EdGr:05}, the localization $K_G(I(\Psi))_{\m_h}$ is a summand of the localization $K_G(I(\Psi))_{\m_\Psi}$, and this summand is independent of the choice of $h$.  This is called the \emph{central summand} of $\Psi$ and is denoted by $K_G(I(\Psi))_{c(\Psi)}$. 

Since $G$ acts with finite stabilizer, the projection 
$f_{\Psi}:I(\Psi) \rTo Y$
is a finite l.c.i.~morphism. The non-Abelian localization theorem of
\cite{EdGr:05} states that the pullback $f_\Psi^* \colon K_G(Y)\otimes
{\mathbb C} \rTo K_G(I(\Psi)) \otimes {\mathbb C}$ induces an
isomorphism between the localization of $K_G(Y)$ at $\m_\Psi$ and
the   central summand 
$K_G(I(\Psi))_{c(\Psi)}\subset K_G(I(\Psi))_{\m_\Psi}$. 
The inverse to
$f^*_\Psi$ is the map $ \alpha \mapsto f_{\Psi *} 
(\alpha 
\cdot
\euler(N_{f_\Psi})^{-1})$. If we let $f$ be the global stabilizer map $I_G Y
\rTo Y$, then, after summing over all conjugacy classes $\Psi$ in the
support of $K_G(Y) \otimes {\mathbb C}$, we obtain an isomorphism
$$f^* \colon K_G(Y) \otimes {\mathbb C} \rTo K_G(I_GY)_c,$$
where $K_G(I_GY)_c
= \bigoplus 
K_G(I(\Psi))_{c(\Psi)}$. 
The inverse is
${f_*\over{\euler{N_f}}}$.

Applying this construction with $Y = I_GX$ allows us to define a product we call \emph{the  localized orbifold product}
\begin{df}
The \emph{ localized orbifold product} on $K_G(I_GX) \otimes {\mathbb C}$ is defined by the formula 
\begin{equation*}
\alpha \star_{LO} \beta = If_*\left( (If^*\alpha \star If^*\beta) 
\otimes \euler(N_{If})^{-1}\right),
\end{equation*}
where $\star$ is the usual orbifold product on 
$K_G(I_G(I_GX))_c$,
and $If \colon 
I_G(I_GX)
 \rTo I_GX$  is the projection.\end{df}
\begin{rem}
It should be noted that 
$I_G(I_GX)$ 
is not the same as 
$\Itwo_GX$. The inertia $
I_G(I_GX)
 = \{(h,g,x) | hx = gx = x, hg = gh\}$ is a closed
subspace of $\Itwo_GX$.
\end{rem}
The  localized product can be interpreted in the context of the 
$\star_{V^+}$ product,
where the vector bundle $V$ is replaced by the virtual bundle $-N_f$. 
Observe that the pullback of $\T$ to $I_GX$ splits as $\T = 
\T_{\IGX}
 + N_f$,
where $N_f$ is the normal bundle to the finite l.c.i.~map $I_GX \rTo X$.
Although $N_f$ is not a bundle on $X$, we can still compute $N_f^+(g_1,g_2)$
on $\Itwo_G X$. 

The same formal argument used in the proof of Theorem \ref{prop.star+geo} yields the following result.
\begin{prop}
The class 
$\euler(LR(\T) + \ar^+(-N_f))$ 
is well defined in localized $K$-theory
and $$\alpha \star_{LO} \beta = \alpha \star_{(-N_f)^+} \beta.$$
\end{prop}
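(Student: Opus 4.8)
The plan is to run the same formal argument as in the proof of Theorem~\ref{prop.star+geo}, replacing the genuine vector bundle $V$ throughout by the virtual normal bundle $-N_f$, and replacing the vector-bundle pullback isomorphism $(I\pi)^*$ by the non-Abelian localization isomorphism $If^*$ of \cite{EdGr:05}. By construction, $\star_{LO}$ is the transport, through $If^*$ with inverse $If_*\bigl((-)\otimes\euler(N_{If})^{-1}\bigr)$, of the ordinary orbifold product on the stack $I\ix=[\IGX/G]$, whose tangent bundle is $\T_{\IGX}=\T|_{\IGX}-N_f$ in $K_G(\IGX)$. Thus $-N_f$ here plays exactly the role that $V$ plays for the total space $[V/G]$, whose tangent bundle is $\T+V$; the sign is accounted for by the fact that $N_f$ is subtracted rather than added.

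First I would dispatch well-definedness. Since the logarithmic trace $L(g)$ and the operation of taking $g$-invariants are additive, the construction $V\mapsto V^+(g_1,g_2)$ is additive, so $\ar^+(-N_f)=-\ar^+(N_f)$ and hence
\[
\euler\bigl(\Lr{\T}+\ar^+(-N_f)\bigr)=\euler\bigl(\Lr{\T}\bigr)\cdot\euler\bigl(\ar^+(N_f)\bigr)^{-1}.
\]
The first factor is the Euler class of a genuine non-negative bundle, so the only point is invertibility of $\euler(\ar^+(N_f))$ in the central summand localization of $K_G(\IGX)\otimes\nc$. This is exactly the invertibility guaranteed by the localization theorem of \cite{EdGr:05}: $N_f$ is the normal bundle of the finite l.c.i.\ stabilizer map $f\colon\IGX\rTo X$, and the same fact that makes $\euler(N_{If})^{-1}$ and the inverse of $If^*$ well-defined makes $\euler(\ar^+(N_f))$ invertible.

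For the identity $\alpha\star_{LO}\beta=\alpha\star_{(-N_f)^+}\beta$, I would unwind $\star_{LO}$ as in Theorem~\ref{prop.star+geo}. The finite l.c.i.\ morphism $If\colon I_G(\IGX)\rTo\IGX$ fits into Cartesian diagrams and a factorization of the multiplication map analogous to \eqref{eq.factor}; the normal bundle to the relevant inclusion is the $V=-N_f$ analogue of the excess bundle $V_\Psi-V_\Phi$ of that proof. Transporting the obstruction of the orbifold product on $I\ix$ — built from $\T_{\IGX}=\T|_{\IGX}-N_f$ — through $If^*$ and $If_*$, and incorporating this excess bundle, I would rewrite the transported class as $\Lr{\T}+\Lr{-N_f}+(V_\Psi-V_\Phi)$ on the appropriate triple-sector. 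Applying the logarithmic-trace identity
\[
L(g_1)(V)+L(g_2)(V)+L((g_1g_2)^{-1})(V)+V-V^{g_1,g_2}+V^{g_1,g_2}-V^{g_1g_2}=L(g_1)(V)+L(g_2)(V)-L(g_1g_2)(V)
\]
with $V=-N_f$ then collapses this to $\Lr{\T}+\ar^+(-N_f)$, which is precisely the class defining the $\star_{(-N_f)^+}$ product. This yields the asserted equality.

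The main obstacle is that $If$ is a finite l.c.i.\ morphism rather than a vector-bundle projection, so—unlike the situation of Theorem~\ref{prop.star+geo}, where $(I\pi)^*$ is a genuine isomorphism compatible with the ordinary product—the transport is an isomorphism only on the central summand after localizing. Consequently every step must be carried out in localized $K$-theory, and the real work lies in identifying the excess normal bundle $N_{If}$ with the logarithmic-trace contributions of $N_f$ (the analogue of $V_\Psi-V_\Phi$) and in checking that all Euler classes appearing remain invertible throughout. This is the one place where the finite (rather than smoothly fibered) nature of $If$ could cause trouble, but once the localization theorem of \cite{EdGr:05} supplies the required invertibility, the formal computation of Theorem~\ref{prop.star+geo} goes through verbatim.
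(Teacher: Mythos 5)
Your proposal is correct and follows essentially the same route as the paper: the paper's entire proof is the single sentence that ``the same formal argument used in the proof of Theorem~\ref{prop.star+geo} yields the following result,'' and your write-up simply fleshes out that argument, substituting $-N_f$ for $V$, using $\T_{\IGX}=\T|_{\IGX}-N_f$ in place of $\T+V$, and invoking the nonabelian localization theorem of \cite{EdGr:05} for the invertibility needed to make $\euler(\ar^+(-N_f))$ and the transport through $If^*$ well defined. Your explicit attention to where localization is required is a welcome elaboration, not a departure.
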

\begin{rem}
  The inertial pair corresponding the  localized product is the
  formal pair $(LR(\T) + R^+(-N_f),{\cs}\T + {\as}^+(-N_f))$. However,
  the Chern character corresponding to this inertial pair is the usual
  orbifold Chern character and the corresponding product on $A^*(I\ix)$
is the usual orbifold product. The reason is that the orbifold Chern character
isomorphism
  factors through $K_G(I_GX)_{(1)}$,  the localization of  
$K_G(I_GX)$ at the augmentation ideal of $R(G)$. This localization corresponds to the untwisted sector of $I_GX$ where $f$ restricts to the identity map. 
\end{rem}
\begin{rem}
Identifying $K_G(I_GX)_\nc$ with the localization of $K_G(I_G(I_GX))_\nc$ allows us to invert the class $\euler(N_f)$. In \cite[Section 3.4]{Kau:10a} the author gives a framework for defining similar products after formally inverting the Euler classes of normal bundles.
\end{rem}
\subsection{An example with $\Pro(1,2)$}
We consider the weighted projective line $\ix = \Pro(1,2) = 
[(\A^2\smallsetminus \{0\})/{\mathbb C}^*]$, 
where ${\mathbb C}^*$ acts with
weights $(1,2)$.  The inertia stack $I\ix$ has two sectors $\ix^1 =
\ix$ and $\ix^{-1} = \B\mu_2$. We have $K(\ix^1) 
\otimes\nc
= {\mathbb  C}[\chi]/\left<(\chi -1)(\chi^2-1)\right>$ and 
$K(\ix^{-1})\otimes\nc = {\mathbb C}[\chi]/\left<\chi^2-1\right>$.  In particular 
$K(I\ix) \otimes {\mathbb C}$  
is supported at $\pm 1  \in {\mathbb C}^*$. As was the case in
Section \ref{sec.pro13}, inertial ring structures are determined by
the products $\one_{g_1} \star \one_{g_2} \in K(\ix^{g_1g_2})$. In
terms of the localization decomposition, $K(I\ix) \otimes\nc
= K(I\ix)_{(1)}
\oplus K(I \ix)_{(-1)}$. 
The  localized product is determined by computing the corresponding
orbifold product on each localized piece using the decomposition of
the element $\one_{g}$ into its localized pieces and the
product $\one_{g_1} \star_{LO} \one_{g_2}$ decomposes as
$$(\one_{g_1})_{(1)} \star_{LO} (\one_{g_2})_{(1)} + 
(\one_{g_1})_{(-1)} \star_{LO} (\one_{g_2})_{(-1)}.$$

The multiplication matrix for $K(I\ix)_{(1)}$
is the usual orbifold matrix, which in this case is the following.

\begin{center}
\begin{tabular}{l|cc}
 & $\ix^1$ & $\ix^{-1}$\\
\hline
$\ix^1$  & 1 & 1 \\
$\ix^{-1}$   & 1 & $\euler(\chi)$\\
\end{tabular}
\end{center}

The multiplication matrix for the  localized product on $K(I\ix)_{(-1)}$
is the same as the multiplication matrix for the orbifold product on
$\B\mu_2$ which is the following.

\begin{center}
\begin{tabular}{l|cc}
& $\ix^1$ & $\ix^{-1}$\\
\hline
$\ix^1$ &  1 & 1 \\
$\ix^{-1}$ & 1 & 1\\
\end{tabular}
\end{center}

Thus we see that the only nontrivial product is 
$\one_{(-1)} \star_{LO} \one_{(-1)}$.
To obtain a single multiplication matrix we use the decomposition
$$\one_{(-1)} = (1+ \chi)/2 + (1-\chi)/2 \in K(\ix^{-1})\otimes\nc,$$
where $(1+\chi)/2$ is supported at $1$ and $(1-\chi)/2$ is supported
at $-1$. The final result is the following.
\begin{center}
\begin{tabular}{l|cc}
& $\ix^1$ & $\ix^{-1}$\\
\hline
$\ix^1$ & 1 & 1 \\
$\ix^{-1}$ & 1 & ${(1+\chi)^2\euler(\chi) + (1-\chi)^2\over{4}}$
\end{tabular}
\end{center}
Because the twisted sector $\ix^{-1}$ has dimension 0, both the orbifold 
and usual Chern characters on this sector compute the virtual rank. The untwisted
sector $\Pro(1,2)$ has Chow ring ${\mathbb C}[t]/\langle t^2 \rangle$ 
where             $t = c_1(\chi)$.
Thus $\ch(\euler(\chi)) = t \in A^*(\Pro(1,2)) \otimes {\mathbb C}$.
Observe that 
$$\ch\left({(1+\chi)^2\euler(\chi) + (1-\chi)^2\over{4}}\right)
= {(2 + t)^2(t) + (-t)^2\over{4}} = t$$ in ${\mathbb C}[t]/\langle t^2 \rangle$
as well.

\nocite{EdGr:00,ARZ:08,HuWa:11, GHHK:11, BeUr:07, BGNX:11, BGNX:07}
%  \bibliography{refs}{}
%  \bibliographystyle{halpha}

%%% The following is pasted from the .bbl file %%
\newcommand{\etalchar}[1]{$^{#1}$}
\def\cprime{$'$}

%%%% End pasted section %%

\end{document}